\titleformat{\subsubsection}[runin]
  {\normalfont\itshape}
  {\thesubsubsection}
  {1ex}
  {\addperiod}
\titleformat{\paragraph}[runin]
  {\normalfont\itshape}
  {\theparagraph}
  {1ex}
  {\addperiod}
\newcommand{\addperiod}[1]{#1.}
\let\oldabstractname\abstractname
\renewcommand*\abstractname{\sffamily\oldabstractname}
\numberwithin{equation}{section}
\declaretheorem[Refname={Theorem,Theorems}]{theorem}
\numberwithin{theorem}{section} 
\declaretheorem[style=definition,numberlike=theorem,Refname={Remark,Remarks}]{remark}
\declaretheorem[numberlike=theorem,Refname={Lemma,Lemmas}]{lemma}
\declaretheorem[name=Corollary,numberlike=theorem,Refname={Corollary,Corollaries}]{corollary}
\declaretheorem[name=Proposition,numberlike=theorem,Refname={Proposition,Propositions}]{proposition}
\DeclarePairedDelimiterX\Set[2]{\lbrace}{\rbrace}%
{ #1 \,:\, #2 }                                         
\DeclarePairedDelimiterX\inprod[2]{\langle}{\rangle}%
{ #1 , #2 }                                             
\DeclarePairedDelimiter{\floor}{\lfloor}{\rfloor}         
\renewcommand{\b}[1]{\mathbf{#1}}
\newcommand{\R}{\mathbb{R}} 
\newcommand{\N}{\mathbb{N}} 
\newcommand{\Z}{\mathbb{Z}} 
\newcommand{\C}{\mathbb{C}} 
\newcommand{\ri}{\mathrm{i}} 
\title{\sffamily Approximation in Hilbert spaces of the Gaussian and related analytic kernels}
\author{\sffamily Toni Karvonen$^{1,2}$ --- Yuya Suzuki$^3$}
\date{
  {     
    \normalsize
    \sffamily
    $^1$School of Engineering Sciences, Lappeenranta--Lahti University of Technology LUT, Finland\\
    \vspace{0.1cm}
    $^2$Department of Mathematics and Statistics, University of Helsinki, Finland \\
    \vspace{0.1cm}
    $^3$Department of Mathematics and Systems Analysis, Aalto University, Finland \\
  }
  \vspace{0.4cm}
  \sffamily\today
}
\begin{document}

\maketitle

\begin{abstract}
  \noindent
  We consider linear approximation based on function evaluations in reproducing kernel Hilbert spaces of certain analytic weighted power series kernels and stationary kernels on the interval $[-1,1]$.
  Both classes contain the popular Gaussian kernel $K(x, y) = \exp(-\tfrac{1}{2}\varepsilon^2(x-y)^2)$.
  For weighted power series kernels we derive almost matching upper and lower bounds on the worst-case error.
  When applied to the Gaussian kernel, our results state that, up to a sub-exponential factor, the $n$th minimal error decays as $(\varepsilon/2)^n (n!)^{-1/2}$.
  The proofs are based on weighted polynomial interpolation and classical polynomial coefficient estimates that we use to bound the Hilbert space norm of a weighted polynomial fooling function.
\end{abstract}

\section{Introduction} \label{sec:intro}

Let $K \colon [-1,1] \times [-1,1] \to \R$ be a positive-semidefinite kernel on the interval $[-1, 1]$ and $H(K)$ its reproducing kernel Hilbert space equipped with inner product $\langle \cdot, \cdot \rangle_K$ and norm $\lVert \cdot \rVert_K$.
This space consists of certain functions defined on $[-1,1]$.
Properties of the kernel determine which functions are contained in $H(K)$.
For example, if $K$ is $m$ times differentiable in both arguments, all functions in $H(K)$ are $m$ times differentiable; if $K$ is analytic, $H(K)$ consists of analytic functions~\cite[Sec.\@~2.1]{SaitohSawano2016}.
In this paper we study approximation in $H(K)$ when the reproducing kernel is analytic and obtain reasonably tight upper and lower bounds on three types of worst-case errors.

Let $\lVert \cdot \rVert_p$ denote the $L^p = L^p([-1,1])$-norm and let $x_1, \ldots, x_n \in [-1, 1]$ be any points.
We assume that we have access to function evaluations at these points, a type of data known as standard information.
Firstly, we study the minimal worst-case error
\begin{equation} \label{eq:wce1}
  e_p^{\min}(x_1, \ldots, x_n) = \inf_{\psi_1, \ldots, \psi_n \in L^p} \sup_{ 0 \neq f \in H(K) } \frac{ \lVert f - \sum_{k=1}^n f(x_k) \psi_k \rVert_p }{\lVert f \rVert_{K}}
\end{equation}
of linear approximation given standard information at the points.
Secondly, we consider the $n$th minimal error 
\begin{equation} \label{eq:wce2}
        e_p^{\min}(n) = \inf_{x_1, \ldots, x_n} e_p(x_1, \ldots, x_n) = \inf_{\substack{x_1, \ldots, x_n \\ \psi_1, \ldots, \psi_n \in L^p}} \sup_{ 0 \neq f \in H(K) } \frac{ \lVert f - \sum_{k=1}^n f(x_k) \psi_k \rVert_p }{\lVert f \rVert_{K}} ,
\end{equation}
which is the smallest approximation error that can be attained with $n$ function evaluations.
Finally, we also study the pointwise worst-case error
\begin{equation} \label{eq:wce-x}
        e^{\min}(x \mid x_1, \ldots, x_n) = \inf_{u_1, \ldots, u_n \in \R} \sup_{ 0 \neq f \in H(K) } \frac{ \lvert f(x) - \sum_{k=1}^n f(x_k) u_k \rvert }{\lVert f \rVert_{K}} ,
\end{equation}
which only tells us about the error at a single fixed point $x \in [-1, 1]$.
The pointwise worst-case error equals the power function in the radial basis function interpolation~\cite{Schaback1993, Wendland2005} and the posterior or kriging variance in Gaussian process modelling~\cite{RasmussenWilliams2006, Scheuerer2013, Stein1999}; see also \Cref{sec:yarotsky}.
The original motivation for this work was to obtain bounds on the pointwise worst-case error that could be used to derive asymptotics for covariance kernel parameter estimators in Gaussian process modelling in a manner similar to~\cite{Karvonen2022-smoothness}.

We are in particular interested in the Gaussian kernel $K(x, y) = \exp(-\tfrac{1}{2} \varepsilon^2 (x - y)^2)$, where $\varepsilon > 0$ is a shape parameter, and its Hilbert space.
Although strong arguments have been made against its use in applications~\cite[p.\@~30]{Stein1999}, the Gaussian kernel remains popular, especially in machine learning~\cite[p.\@~83]{RasmussenWilliams2006}.
We study two classes of kernels that both contain the Gaussian kernel: \emph{weighted power series kernels} given by
\begin{equation} \label{eq:power-series-intro}
        K(x, y) = \varphi(x) \varphi(y) \sum_{k=0}^\infty \alpha_k^{-1} x^k y^k
\end{equation}
for a suitable function $\varphi \colon [-1, 1] \to \R$ and positive coefficients $\alpha_k$, and \emph{stationary kernels} defined by a positive-semidefinite function $\Phi \colon \R \to \R$ via
\begin{equation} \label{eq:stationary-intro}
        K(x, y) = \Phi(x - y) .
\end{equation}
The Gaussian kernel is obtained by setting $\alpha_k = \varepsilon^{-2k} k!$ and $\varphi(x) = e^{-\frac{1}{2}\varepsilon^2 x^2}$ in~\eqref{eq:power-series-intro} or $\Phi(z) = \exp(-\tfrac{1}{2} \varepsilon^2 z^2)$ in~\eqref{eq:stationary-intro}.
How fast the coefficients $\alpha_k$ blow up or how fast the Fourier transform of $\Phi$ tends to zero at infinity determines how smooth these kernels and their Hilbert spaces are.
It would be natural to consider these kernels and approximation in their Hilbert spaces on the entire real line, rather than on a bounded interval as we do. However, the techniques that we use exploit the properties of polynomials on bounded intervals and do not extend easily to other cases.

In \Cref{sec:power-series} we prove that the worst-case errors in~\eqref{eq:wce1}--\eqref{eq:wce-x} decay as $\alpha_n^{-1/2}$, up to an exponential factor, when $K$ is a weighted power series kernel in~\eqref{eq:power-series-intro} whose coefficients blow up factorially or faster.
In \Cref{sec:stationary} we prove analogous lower bounds when $K$ is a stationary kernel in~\eqref{eq:stationary-intro} defined by $\Phi$ whose Fourier transform tends to zero sufficiently fast.
Our main results are \Cref{thm:main-3,thm:main-4}.
By applying \Cref{thm:main-3} to the Gaussian kernel we obtain the following result.

\begin{theorem} \label{thm:intro-gaussian}
  Let $K(x, y) = \exp(-\tfrac{1}{2} \varepsilon^2 (x - y)^2)$ be the Gaussian kernel, $p \in \{2, \infty\}$, and $c_L$ and $m_L$ the functions defined in~\eqref{eq:cL-constant} and~\eqref{eq:mL-constant}.
  If $x_1, \ldots, x_n \in [-1, 1]$ are distinct, then
  \begin{align}
        c_{1,p} \bigg( \frac{\varepsilon}{2} \bigg)^n (n!)^{-1/2} &\leq e_p^{\min}(x_1, \ldots, x_n) \leq c_{2,p} \, n^{-1/8 - 1/p} e^{\varepsilon\sqrt{n}} \, (2 \varepsilon)^{n} (n!)^{-1/2} , \label{eq:intro-31} \\
        c_{1,p} \bigg( \frac{\varepsilon}{2} \bigg)^n (n!)^{-1/2} &\leq e_p^{\min}(n) \leq 2 c_{2,p} \, n^{-1/8} e^{\varepsilon \sqrt{n}} \bigg( \frac{\varepsilon}{2} \bigg)^n (n!)^{-1/2} , \label{eq:intro-32} \\
        c_{1,\infty} \bigg( \frac{\varepsilon}{4} \bigg)^n (n!)^{-1/2} &\leq \frac{e^{\min}(x \mid x_1, \ldots, x_n)}{\lvert \prod_{k=1}^n (x - x_k) \rvert} \leq c_{2,p} \, n^{-1/8} e^{\varepsilon \sqrt{n}} \varepsilon^n (n!)^{-1/2} \label{eq:intro-33} ,
  \end{align}
where the lower bounds hold when $n \geq 1$ and the upper bounds when $n \geq m_L(\varepsilon^2)$, the constants $c_{1, \infty}$ and $c_{1,2}$ are obtained by setting \smash{$\varphi_{\min} = e^{-\frac{1}{2} \varepsilon^2}$} and $\lambda = \varepsilon^2$ in~\eqref{eq:constants-main-intro}, and $c_{2,\infty} = \sqrt{\smash[b]{c_L(\varepsilon^2)}}$ and $c_{2,2} = \sqrt{\smash[b]{2 c_L(\varepsilon^2)}}$.
\end{theorem}

The constant coefficients in the bounds could be slightly improved but we have opted for simplicity over optimality.
The most remarkable feature of these bounds is that~\eqref{eq:intro-32} shows that the rate of decay of the $n$th minimal error is controlled by the shape parameter $\varepsilon$.
That is,
\begin{equation} \label{eq:scale-dependency}
        \frac{e_{p,\varepsilon}^{\min}(n)}{e_{p,\varepsilon'}^{\min}(n)} \to 0 \: \text{ if } \: \varepsilon < \varepsilon' \quad \text{ and } \quad \frac{e_{p,\varepsilon}^{\min}(n)}{e_{p,\varepsilon'}^{\min}(n)} \to \infty \: \text{ if } \: \varepsilon > \varepsilon',
\end{equation}
where the additional subscript indicates dependency on the shape parameter of the Gaussian kernel. 
To understand how estimators of the parameter $\varepsilon$ behave in Gaussian process regression it is crucial to have results such as~\eqref{eq:scale-dependency}.
For example, the maximum likelihood estimate of $\varepsilon$ given data (i.e., function evaluations) at points $x_1, \ldots, x_n$ is
\begin{equation*}
        \hat{\varepsilon}_n = \operatorname*{arg\,min}_{\varepsilon > 0} \big\{ \mathrm{DF}(\varepsilon \mid x_1, \ldots, x_n) + \mathrm{MC}(\varepsilon \mid x_1, \ldots, x_n ) \big\} ,
\end{equation*}
where the former term is a data-fit term that depends on the function evaluations and the latter a model complexity term that depends only on $\varepsilon$ and the points.
The model complexity (we do not discuss the data-fit here) can be written as
\begin{equation*}
        \mathrm{MC}(\varepsilon \mid x_1, \ldots, x_n ) = 2 \sum_{i=1}^n \log e_\varepsilon^{\min}(x_i \mid x_1, \ldots, x_{i-1}),
\end{equation*}
where the first summand is the logarithm of the pointwise initial error at $x_1$ and the $\varepsilon$-dependency is made explicit~\cite[Sec.\@~2.3]{Karvonen2022-smoothness}.
Precise control of the pointwise worst-case error with respect to $\varepsilon$ is therefore essential in order to figure out how $\hat{\varepsilon}_n$ behaves as $n \to \infty$.
To the best of our knowledge, no result resembling~\eqref{eq:scale-dependency} has been proved for the Gaussian kernel or its relatives that we study.
In a paper that has regrettably gone unnoticed outside of Gaussian process literature, Yarotsky~\cite{Yarotsky2013} has proved a lower bound on the pointwise worst-case error that is slightly better than~\eqref{eq:intro-33}.
However, his upper bound is looser than ours.
We review Yarotsky's result in more detail in \Cref{sec:yarotsky}.

Upper bounds on the Gaussian and other analytic kernels abound in the scattered data approximation literature.
These are usually expressed in terms of the power function, which is nothing but the pointwise worst-case error $e^{\min}(x \mid x_1, \ldots, x_n)$.
Typical results for the Gaussian kernel state that, for any $1 \leq p \leq \infty$ and some constants $c$ and $\gamma$,
\begin{equation} \label{eq:power-function-bound}
  \norm[0]{ e^{\min}(\cdot \mid x_1, \ldots x_n)}_p \leq c \, e^{- \gamma n \log n} = c \, n^{- \gamma n}
\end{equation}
if the points $x_1, \ldots, x_n$ are sufficiently uniform on $[-1, 1]$; see~\cite[Thm.\@~11.22]{Wendland2005} and~\cite[Thm.\@~6.1]{RiegerZwicknagl2010}.
Bounds for various weighted power series kernels with $\varphi \equiv 1$ may be found in \cite{Zwicknagl2009, ZwicknaglSchaback2013}.
Because~\eqref{eq:power-function-bound} is a special case of a bound that holds for the Gaussian kernel defined in any dimension and is proved by means of local polynomial reproduction~\cite[Ch.\@~3]{Wendland2005}, the constant $\gamma$ is rather complicated.
For equispaced points on $[-1, 1]$ one may compute that $\gamma \leq 1/48$~\cite[Sec.\@~4.2]{KarvonenTanaka2021}, which gives an extremely sub-optimal rate since Stirling's formula shows that all bounds in \Cref{thm:intro-gaussian} are, up to exponential factors, of order $n^{-n/2}$.
Although apparently absent from the literature with the exception of~\cite{KarvonenTanaka2021}, it is difficult to believe that upper bounds of order $n^{-n/2}$ are not part of the folklore of the field because, as we shall see in \Cref{sec:applications}, proving them in dimension one via polynomial interpolation is a simple exercise.
Connections between polynomial interpolation and interpolation with translates of the Gaussian kernel have been explored in the fascinating papers~\cite{Platte2011, PlatteDriscoll2005}.
Moreover, recent years have seen a host of results on approximation and integration in the Hilbert space of the Gaussian kernel when error is measured in the Gaussian-weighted $L^2$-norm over $\R^d$~\cite{ChenWang2019,FasshauerHickernell2010,FasshauerHickernell2012, Karvonen2021, Karvonen2019, KhartovLimar2022, KhartovLimar2023, KuoSloanWozniakowski2017,KuoWozniakowski2012, SloanWozniakowski2018}.
Results on covering numbers and regression rates for the Gaussian kernel may be found in~\cite{EbertsSteinwart2013, Kuhn2011, SteinwartFischer2021, VaartZanten2009, Zhou2002, Zhou2003}.
Closely related Hermite spaces have been studied in~\cite{Gnewuch2024, Irrgeher2015, Irrgeher2016}.
See also~\cite{AnderssonBojanov1984} for optimal rates of integration in the Hardy space of analytic functions that is obtained by setting $\varphi \equiv 1$ and $\alpha_k = r^{2k}$ for $r \neq 0$ in~\eqref{eq:power-series-intro}.
Chapter~VIII in~\cite{Pinkus1985} reviews various results on $n$-widths of spaces of analytic functions.

Our upper bounds are based on weighted polynomial interpolation, bounds on Laguerre polynomials, and the standard error estimate for polynomial interpolation.
We use the standard fooling function technique to obtain the lower bounds (see~\cite{KriegVybiral2023} for some commentary on this technique).
That is, for a set of $n$ points we construct a function $f_n \in H(K)$ that vanishes at these points, so that the approximations in~\eqref{eq:wce1}--\eqref{eq:wce-x} are identically zero and the worst-case errors are bounded from below by $\lVert f_n \rVert_p / \lVert f_n \rVert_K$ or $\lvert f_n(x) \rvert / \lVert f_n \rVert_K$.
The difficulty is then in estimating the $H(K)$-norm of $f_n$ from above.
If $H(K)$ were a Sobolev space, this would be relatively easy as $f_n$ could be constructed out of bump functions whose Sobolev norms are straightforward to estimate.
Because the spaces we consider consist of analytic functions, bump functions are out of question.
Instead, we select $f_n$ as a weighted polynomial interpolant, a more or less trivial construction.
However, to estimate the $H(K)$-norm of such an $f_n$ is not straightfoward.
To do this we use the classical Markov inequality on the maxima of the derivatives and coefficients of a polynomial from 1892 and its $L^2$-version by Labelle~\cite{Labelle1969}.
To the best of our knowledge these results have not been used to bound worst-case errors before.
We begin by illustrating this technique in the particularly tractable setting of Korobov spaces.

\section{Prelude: Korobov spaces}

Suppose that the reproducing kernel on $[-1,1]$ is given by
\begin{equation*}
  K(x, y) = \sum_{k \in \Z} \lambda_k^{-1} \;\frac{e^{\pi \ri k x}}{\sqrt{2}} \; \overline{\frac{e^{\pi \ri k y}}{\sqrt{2}}} = \sum_{k \in \Z} \lambda_k^{-1} \;\frac{e^{\pi \ri k x}}{\sqrt{2}} \; \frac{e^{-\pi \ri k y}}{\sqrt{2}}
\end{equation*}
for positive $\lambda_k$.
Here $\overline{z}$ stands for the complex conjugate of $z \in \C$.
The reproducing kernel Hilbert space $H(K)$ is then~\cite[Sec.\@~2.1]{Paulsen2016}
\begin{equation} \label{eq:korobov-space}
  H(K) = \Set[\bigg]{ f(x) = \sum_{k \in \Z} c_k \frac{e^{\pi \ri k x}}{\sqrt{2}} }{ \lVert f \rVert_K^2 = \sum_{k \in \Z} \lambda_k \lvert c_k \rvert^2  < \infty } .
\end{equation}
Because $\langle 2^{-1/2} e^{\pi \ri \ell x}, 2^{-1/2} e^{\pi \ri k x}\rangle_{2}= \tfrac{1}{2} \int_{-1}^1 e^{\pi \ri (\ell-k) x} \dif x = \delta_{\ell k}$, where $\delta_{\ell k}$ is Kronecker's delta, the function $2^{-1/2} e^{\pi \ri k x}$ is the $k$th eigenfunction and \smash{$\lambda_k^{-1}$} the corresponding eigenvalue of the integral operator $T_K \colon H(K) \to H(K)$ given by
\begin{equation} \label{eq:TK}
        (T_K f)(x) = \int_{-1}^1 K(x, y) f(y) \dif y .
\end{equation}
If $\lambda_k$ blows up as $\lvert k \rvert^{2s}$, the Hilbert space is a Sobolev space of periodic functions on $[-1, 1]$ with smoothness $s$, also known as a Korobov space.
If $\lambda_k$ grow exponentially fast, $H(K)$ consists of certain periodic analytic functions.
The proof of the following proposition is analogous to the proofs for lower bounds in \Cref{sec:power-series,sec:stationary}.

\begin{proposition} \label{prop:korobov}
If $n$ is even and $x_1, \ldots, x_n \in [-1, 1]$ are any points, then 
  \begin{equation} \label{eq:korobov-lower-bound}
    e_2^{\min}(x_1, \ldots, x_n) \geq  \bigg( \sum_{k=-n/2}^{n/2} \lambda_k \bigg)^{-1/2}.
  \end{equation}
\end{proposition}
\begin{proof}
  Let $a \in (-1, 1)$ be not equal to any of $x_1, \ldots, x_n$.
  It is clear that the function
  \begin{equation*}
    f_n(x) = \prod_{k=1}^n \frac{\sin(\tfrac{1}{2}\pi(x-x_k))}{\sin(\tfrac{1}{2}\pi(a - x_k))}
  \end{equation*}
  vanishes at $x_1, \ldots, x_n$ and equals one at $a$.
  This function is a trigonometric polynomial of order $n/2$ and has the form
  \begin{equation}
    f_n(x) = \sum_{k =  -n/2 }^{ n/2 } c_k \frac{e^{\pi \ri k x}}{\sqrt{2}}
  \end{equation}
  for some coefficients $c_k \in \C$; see~\cite[Sec.\@~1 in Ch.\@~X]{Zygmund2003}.  
Parseval's identity tells us that
\begin{equation} \label{eq:korobov-coef-bound}
 \lVert f_n \rVert_2 =   \bigg(\sum_{\ell =  -n/2 }^{ n/2 } \lvert c_\ell \rvert^2\bigg)^{1/2} \ge \lvert c_k \rvert
\end{equation}
for $-n/2 \leq k \leq n/2$.
Consequently, the Hilbert space characterisation in~\eqref{eq:korobov-space} gives
\begin{equation} \label{eq:korobov-norm-estimate}
        \lVert f_n \rVert^2 _K= \sum_{k=-n/2}^{n/2} \lambda_k \lvert c_k \rvert^2 \leq  \lVert f_n \rVert_2^2 \sum_{k=-{n/2}}^{n/2} \lambda_k .
\end{equation}
Because $f_n$ vanishes at $x_1, \ldots, x_n$, the function $\sum_{k=1}^n f_n(x_k) \psi_k$ is identically zero for any $\psi_k \in L^2$.
Therefore~\eqref{eq:wce1} and~\eqref{eq:korobov-norm-estimate} yield
\begin{equation*}
    e_2^{\min}(x_1, \ldots, x_n) \geq \frac{ \lVert f_n \rVert_2 }{\lVert f_n \rVert_{K}} \geq \frac{ \lVert f_n \rVert_2 }{  \lVert f_n \rVert_2 (\sum_{k=-n/2}^{n/2} \lambda_k)^{1/2} } =\bigg( \sum_{k=-n/2}^{n/2} \lambda_k \bigg)^{-1/2}. \qedhere
\end{equation*}  
\end{proof}

The purpose of \Cref{prop:korobov} is only to illustrate our proof technique.
It is well known that square root of the $n$th largest eigenvalue of the integral operator~\eqref{eq:TK} equals the Kolmogorov $n$-width in $L^2$, which is the smallest possible worst-case error in $L^2$ given $n$ pieces of linear information~\cite[Cor.\@~4.12]{NovakWozniakowski2008}.
If $\lambda_k = \lambda_{-k}$ and $\lambda_{\lvert k \rvert}$ is non-decreasing in $\lvert k \rvert$, this implies the lower bound
\begin{equation*}
  e_2^{\min}(x_1, \ldots, x_n) \geq e_2^{\min}(n) \geq \lambda_{n/2}^{-1/2} ,
\end{equation*}
which is tighter than~\eqref{eq:korobov-lower-bound}.
In \Cref{sec:power-series,sec:stationary} no such trivial lower bound is available because we do not have access to the eigenvalues of $T_K$ and the polynomial basis functions we work with are not $L^2$-orthonormal.
Moreover, polynomial coefficient bounds of the form~\eqref{eq:korobov-coef-bound} involve additional factors on the right-hand side, which complicates estimation of the sum in~\eqref{eq:korobov-lower-bound}.

It is not surprising that the above technique yields sub-optimal lower bounds: the coefficient estimate~\eqref{eq:korobov-coef-bound} holds for \emph{all} trigonometric polynomials, not just those that vanish at $x_1, \ldots, x_n$.
That is, not all properties of the fooling function are exploited in the proof and it would be unrealistic to expect a tight lower bound.
However, what is somewhat surprising is that such a crude technique nevertheless suffices for the almost tight lower bound in~\eqref{eq:intro-31} for the $n$th minimal error.

\section{Weighted power series kernels} \label{sec:power-series}

Let $\varphi \colon [-1, 1] \to \R$ be a non-vanishing Lebesgue measurable function and $(\alpha_k)_{k = 0}^\infty$ a positive sequence such that $\sum_{k=0}^\infty \alpha_k^{-1} < \infty$.
It is easy to see that
\begin{equation} \label{eq:kernel}
  K(x, y) = \varphi(x) \varphi(y) \sum_{k=0}^\infty \alpha_k^{-1} x^k y^k
\end{equation}
is a well-defined strictly positive-definite kernel on $[-1, 1] \times [-1, 1]$.
When $\varphi \equiv 1$, these kernels are called power series kernels~\cite{Zwicknagl2009, ZwicknaglSchaback2013}.
Accordingly, we call any kernel of the form~\eqref{eq:kernel} a weighted power series kernel.
From~\eqref{eq:kernel} it follows that~\cite[Sec.\@~2.1]{Paulsen2016} the Hilbert space in which $K$ is reproducing is 
\begin{equation} \label{eq:rkhs-norm}
        H(K) = \Set[\bigg]{ f(x) = \varphi(x) \sum_{k=0}^\infty c_k x^k }{ \lVert f \rVert_K^2 = \sum_{k=0}^\infty \alpha_k c_k^2 < \infty } .
\end{equation}
That is, the rate of growth of $\alpha_k$ determines how large $H(K)$ is.
We shall make two assumptions.
Firstly, we assume that there are positive constants $\varphi_\textup{min}$ and $\varphi_\textup{max}$ such that 
\begin{equation} \label{eq:phi-assumption}
  0 < \varphi_\textup{min} \leq \abs[0]{\varphi(x)} \leq \varphi_\textup{max} < \infty 
\end{equation}
for all $x \in [-1, 1]$.
This is a very mild assumption.
Our second assumption is that there exists a positive constant $\lambda$ such that
\begin{equation} \label{eq:alpha-assumption}
  \frac{\alpha_{k}}{\alpha_n} \leq \lambda^{n-k} \frac{k!}{n!} 
\end{equation}
for all $n \geq 0$ and $0 \leq k \leq n$.
Implying that $\alpha_k \geq \alpha_0 \lambda^{-k} k!$ for every $k \geq 0$, this assumption is far from necessary to ensure that $H(K)$ consists of analytic functions.
Why we employ~\eqref{eq:alpha-assumption} is related to deficiencies in the technique we use to obtain lower bounds and is discussed in more detail in \Cref{rmk:alpha-assumption-unif,rmk:alpha-assumption-l2}.
Note that we have not imposed any smoothness assumptions on the function $\varphi$, the qualitative properties of which play no role in our results.
While a non-smooth $\varphi$ yields a Hilbert space of non-smooth functions, these functions are ``morally'' smooth, being products of a smooth function and a known non-smooth function.
It is necessary to include $\varphi$ if our results are to apply to the Gaussian kernel.

We are aware of four weighted power series kernels that have closed form expressions, satisfy~\eqref{eq:phi-assumption} and~\eqref{eq:alpha-assumption}, and have appeared in the literature.
Let $\varepsilon > 0$ be a shape parameter.
The Gaussian kernel
\begin{equation} \label{eq:gaussian-kernel-intro}
    K(x, y) = \exp\bigg(\! -\frac{1}{2} \varepsilon^2 (x - y)^2 \bigg)
\end{equation}
is obtained by selecting $\alpha_k = \varepsilon^{-2k} k!$ and $\varphi(x) = e^{-\frac{1}{2}\varepsilon^2 x^2}$.
The exponential kernel $K(x, y) = e^{\varepsilon xy}$ is obtained by selecting $\alpha_k = \varepsilon^{-k} k!$ and $\varphi \equiv 1$.
Let $\tau > 0$.
The analytic Hermite kernel (or Mehler kernel)
\begin{equation*}
    K(x, y) = \exp\bigg( \! - \frac{1}{2} \varepsilon^2(x^2 + y^2)  + \tau^2 \varepsilon^2 xy \bigg)
\end{equation*}
is obtained by selecting $\alpha_k = (\tau \varepsilon)^{-2k} k!$ and $\varphi(x) = e^{-\frac{1}{2} \varepsilon^2 x^2}$.
If $\tau = 1$, this kernel reduces to the Gaussian kernel.
But taking any $r \in (0, 1)$ and setting $\varepsilon^2 = r^2/(1 - r^2)$ and $\tau^2 = 1/r$ yields, via Mehler's formula,
\begin{equation*}
    K(x, y) = \exp\bigg( \! - \frac{r^2(x^2 + y^2) - 2rxy}{2(1 - r^2)} \bigg) = \sqrt{1 - r^2} \, \sum_{k=0}^\infty \frac{r^k}{k!} H_k(x) H_k(y),
\end{equation*}
where $H_k$ are the probabilist's Hermite polynomials; see~\cite[Sec.\@~3.1.2]{IrrgeherLeobacher2015} and~\cite[Sec.\@~5.2]{Karvonen2022-sample}.
Finally, let $\mathcal{I}_0$ be the modified Bessel function of the first kind of order zero.
The Bessel kernel $K(x, y) = \mathcal{I}_0( 2 \varepsilon \sqrt{\smash[b]{x y}} \, )$ is obtained by selecting $\alpha_k = \varepsilon^{-k} k!^2$ and $\varphi \equiv 1$~\cite[p.\@~64]{Zwicknagl2009}.

The bulk of this section is taken up by the proof of the following theorem that provides upper and lower bounds on the worst-case errors in~\eqref{eq:wce1}--\eqref{eq:wce-x}.

\begin{theorem} \label{thm:main-3}
    Let $K$ be a weighted power series kernel in~\eqref{eq:kernel} and $p \in \{2, \infty\}$.
    Suppose that $\varphi$ and $(\alpha_k)_{k=0}^\infty$ satisfy~\eqref{eq:phi-assumption} and~\eqref{eq:alpha-assumption}.
    Let $c_L$ and $m_L$ be the functions defined in~\eqref{eq:cL-constant} and~\eqref{eq:mL-constant}.
    If $x_1, \ldots, x_n \in [-1, 1]$ are distinct, then
    \begin{align}
          c_{1,p} \, 2^{-n} \alpha_n^{-1/2} &\leq e_p^{\min}(x_1, \ldots, x_n) \leq c_{2, p} \, \varphi_{\max} \, n^{-1/8 - 1/p} e^{\sqrt{\lambda n}} \, 2^{n} \alpha_n^{-1/2} , \label{eq:main-31} \\
          c_{1,p} \, 2^{-n} \alpha_n^{-1/2} &\leq e_p^{\min}(n) \leq 2 c_{2, p} \, \varphi_{\max} \, n^{-1/8} e^{\sqrt{\lambda n}} \,  2^{-n}  \alpha_n^{-1/2} , \label{eq:main-32} \\
          c_{1,\infty} \, 2^{-2n} \alpha_n^{-1/2} &\leq \frac{e^{\min}(x \mid x_1, \ldots, x_n)}{\lvert \prod_{k=1}^n (x - x_k) \rvert} \leq c_{2, \infty} \, \varphi_{\max} \, n^{-1/8} e^{\sqrt{\lambda n}} \alpha_n^{-1/2} \label{eq:main-33} ,
    \end{align}
  where the lower bounds hold when $n \geq 1$ and the upper bounds when $n \geq m_L(\lambda)$,
    \begin{equation} \label{eq:constants-main-intro}
      c_{1,\infty} = \sqrt{ \frac{2}{1 + \lambda}} \, e^{-\frac{1}{32}\lambda^2} \varphi_\textup{min} \quad \text{ and } \quad c_{1, 2} = \frac{2\sqrt{2\pi}}{3e^3 \sqrt{\smash[b]{3(4 + \lambda)}}} \, e^{-\frac{1}{32}\lambda^2} \varphi_\textup{min},
    \end{equation}
    and $c_{2,\infty} = \sqrt{\smash[b]{c_L(\lambda)}}$ and $c_{2,2} = \sqrt{\smash[b]{2 c_L(\lambda)}}$.
\end{theorem}

The lower bounds in \Cref{thm:main-3} are proved in \Cref{sec:lower-Linf,sec:lower-L2} and the upper bounds in \Cref{sec:upper-bounds}.
\Cref{sec:applications} applies the above theorem to the weighted power series kernels mentioned above. 
Bounds on worst-case errors given derivative information are derived in \Cref{sec:derivative-information}.
Formally this corresponds to setting $x_1 = \cdots = x_n = a \in (-1, 1)$ in~\eqref{eq:main-31}.
Some remarks on extensions to higher dimensions can be found in \Cref{sec:higher-dimensions}.

\subsection{Lower bounds for approximation in $L^\infty([-1, 1])$} \label{sec:lower-Linf}

In this section we prove lower bounds on $e_\infty^{\min}(x_1, \ldots, x_n)$, $e_\infty^{\min}(n)$, and $e^{\min}(x \mid x_1, \ldots, x_n)$.
The Chebyshev polynomial of the first kind of degree $n$ is 
\begin{equation*}
  T_n(x) = \sum_{k=0}^n t_{n,k} x^k,
\end{equation*}
where the non-zero coefficients are
\begin{equation} \label{eq:chebyshev-coefs}
  t_{n, n - 2k} = (-1)^k \frac{2^{n-2k-1} n (n-k-1)!}{k! (n - 2k)!} \quad \text{ for } \quad 0 \leq k \leq \floor[\bigg]{\frac{n}{2}}.
\end{equation}
Note that the leading coefficient, $t_{n,n}$, equals $2^{n-1}$.
The roots of $T_n$ are the Chebyshev nodes
\begin{equation} \label{eq:chebyshev-nodes}
  x_{n,k} = \cos\bigg( \frac{\pi(k - 1/2)}{n} \bigg) \in (-1, 1) \quad \text{ for } \quad 1 \leq k \leq n.
\end{equation}
The extrema, which are either $1$ or $-1$, of $T_n$ on $[-1, 1]$ are located at
\begin{equation} \label{eq:chebyshev-extrema}
  \tilde{x}_{n,k} = \cos(\pi k/n) \quad \text{ for } \quad 0 \leq k \leq n.
\end{equation}
That is, $T_n(x) \in [-1, 1]$ for all $x \in [-1, 1]$.
For our purposes the importance of Chebyshev polynomials lies in a classical result by V.\ A.\ Markov from 1892 (for a brief history, see~\cite[p.\@~679]{RahmanSchmeisser2002}) and its generalisations~\cite[Sec.\@~16.3]{RahmanSchmeisser2002} that $T_n$ has the maximal coefficients among polynomials of degree~$n$ that take values in $[-1, 1]$.
Specifically~\cite[Thm.\@~16.3.3]{RahmanSchmeisser2002}, if $P_n(x) = \sum_{k=0}^n a_k x^k$ is a polynomial of degree $n$ with real coefficients $a_k$ and $\abs[0]{P_n(\tilde{x}_{n,k})} \leq 1$ for every $0 \leq k \leq n$, then
\begin{equation} \label{eq:general-markov-inequality}
  \abs[0]{a_{n-2k}} + \abs[0]{a_{n-2k-1}} \leq \abs[0]{t_{n,n-2k}} \quad \text{ for every } \quad 0 \leq k \leq \floor[\bigg]{\frac{n-1}{2}}.
\end{equation}
The lemmas below use this result to estimate the $H(K)$-norm of the weighted polynomial $f_n(x) = \varphi(x) P_n(x)$.

\begin{lemma} \label{lemma:chebyshev-c-lemma}
  Suppose that assumption~\eqref{eq:alpha-assumption} holds.
  Then for every $n \geq 1$ we have
  \begin{equation} \label{eq:chebyshev-norm-lemma}
    \tau_1 2^{2n} \alpha_n \leq \sum_{k=0}^{\floor{n/2}} \alpha_{n - 2k} t_{n, n - 2k}^2 \leq \tau_2 2^{2n} \alpha_n, 
  \end{equation}
  where
  \begin{equation} \label{eq:chebyshev-norm-lemma-constants}
    \tau_1 = \frac{1}{4} \quad \text{ and } \quad \tau_2 = \frac{1}{2} e^{\frac{1}{16} \lambda^2 }.
  \end{equation}
\end{lemma}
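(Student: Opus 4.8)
The plan is to work directly with the explicit formula~\eqref{eq:chebyshev-coefs} for the nonzero Chebyshev coefficients and combine it with the factorial bounds on $\alpha_n$ coming from assumption~\eqref{eq:alpha-assumption}. Write the summand in~\eqref{eq:chebyshev-norm-lemma} as $\alpha_{n-2k} t_{n,n-2k}^2$ and factor out $\alpha_n 2^{2n}$, so that the sum becomes
\begin{equation*}
  \sum_{k=0}^{\floor{n/2}} \alpha_{n-2k} t_{n,n-2k}^2 = \alpha_n 2^{2n} \sum_{k=0}^{\floor{n/2}} \frac{\alpha_{n-2k}}{\alpha_n} \cdot \frac{t_{n,n-2k}^2}{2^{2n}}.
\end{equation*}
For the lower bound one simply keeps the $k=0$ term: since $t_{n,n} = 2^{n-1}$, that term alone contributes $\alpha_n 2^{2n-2} = \tfrac14 \alpha_n 2^{2n}$, giving $\tau_1 = 1/4$ immediately (all other terms are nonnegative). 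The work is therefore entirely in the upper bound.

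For the upper bound I would estimate $t_{n,n-2k}^2/2^{2n}$ from above using~\eqref{eq:chebyshev-coefs}: one gets
\begin{equation*}
  \frac{t_{n,n-2k}^2}{2^{2n}} = \frac{1}{4} \bigg( \frac{n\,(n-k-1)!}{4^k\,k!\,(n-2k)!} \bigg)^2,
\end{equation*}
and the ratio of factorials $\frac{n\,(n-k-1)!}{k!\,(n-2k)!}$ can be bounded crudely, e.g.\ by $\binom{n}{k}$ times something of size at most $1$, or more simply by noting $\frac{(n-k-1)!}{(n-2k)!} \le (n-k)!/(n-2k)! \le n^k$ and hence the whole bracket is at most $n^{k+1}/(4^k k!)$; a cleaner route is to bound it by $\binom{n-k}{k}\le \binom{n}{k}$ (up to the factor $n/(n-k)\le 2$). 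Meanwhile assumption~\eqref{eq:alpha-assumption} with the substitution $n \mapsto n$, $k \mapsto n-2k$ gives $\alpha_{n-2k}/\alpha_n \le \lambda^{-2k}\,(n-2k)!/n!$. Multiplying the two bounds, the factorial parts should telescope to leave something summable; the target is
\begin{equation*}
  \sum_{k=0}^{\floor{n/2}} \frac{\alpha_{n-2k}}{\alpha_n}\cdot\frac{t_{n,n-2k}^2}{2^{2n}} \le \frac{1}{4}\sum_{k=0}^{\infty} \frac{(\lambda^2/16)^k}{k!\,\cdot(\text{something}\ge 1)} \le \frac14 e^{\lambda^2/16},
\end{equation*}
which yields exactly $\tau_2 = \tfrac12 e^{\lambda^2/16}$. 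The constant $16$ in the exponent strongly suggests that after cancellation each term is at most $\tfrac14\,(\lambda^2/16)^k/k!$, coming from the $4^{-k}$ in $t_{n,n-2k}/2^{n-1}$ (squared, giving $16^{-k}$) against the $\lambda^{-2k}$ from~\eqref{eq:alpha-assumption} — so I should be careful to track which factor of $\lambda$ goes where, since~\eqref{eq:alpha-assumption} contributes $\lambda^{-2k}$ and one needs the remaining factorials to combine into a term bounded by $\lambda^{2k}/k!$ up to constants.

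The main obstacle is the bookkeeping in the upper bound: one must show $\lambda^{-2k}\cdot\frac{(n-2k)!}{n!}\cdot\big(\frac{n(n-k-1)!}{k!(n-2k)!}\big)^2 \le \frac{(\lambda^2/16)^k}{k!}$ (up to an overall constant $\le 2$), i.e.\ that
\begin{equation*}
  \frac{n^2\,(n-k-1)!^2}{n!\,k!^2\,(n-2k)!} \le \frac{1}{k!}\cdot\frac{16^k}{\lambda^{0}}\cdot(\text{const})
\end{equation*}
uniformly in $n$ and $0\le k\le\floor{n/2}$ — wait, the $16^k$ must land on the right side of the Chebyshev estimate, not here; so really the purely combinatorial claim to verify is $\frac{n^2\,(n-k-1)!^2}{n!\,k!\,(n-2k)!}\le C$ for a universal constant $C$, which follows because $\frac{(n-k-1)!^2\,n^2}{n!\,(n-2k)!} = \frac{n}{n-k}\cdot\frac{(n-k)!\,(n-k-1)!}{n!\,(n-2k)!}\le 2\cdot\binom{n-k-1}{k}/\binom{n}{k}\le 2$, so $C=2$ and $k!$ stays in the denominator. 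Then summing $\sum_k 2\cdot\frac14\cdot\frac{(\lambda^2/16)^k}{k!} = \tfrac12 e^{\lambda^2/16}$ closes the argument; I would double-check the constant $C$ and the exact power of $\lambda$ against~\eqref{eq:chebyshev-coefs} and~\eqref{eq:alpha-assumption} to make sure $\tau_2 = \tfrac12 e^{\lambda^2/16}$ is reproduced exactly.
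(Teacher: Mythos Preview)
Your overall plan is exactly the paper's: factor out $\alpha_n 2^{2n}$, read off the lower bound from the $k=0$ term, and for the upper bound combine~\eqref{eq:chebyshev-coefs} with~\eqref{eq:alpha-assumption} so that each summand is bounded by $C(\lambda^2/16)^k/k!$ and the series sums to $e^{\lambda^2/16}$. But the execution has two concrete mistakes that you should fix.

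First, you have the exponent on $\lambda$ backwards. Assumption~\eqref{eq:alpha-assumption} with $k\mapsto n-2k$ gives
\[
  \frac{\alpha_{n-2k}}{\alpha_n}\ \le\ \lambda^{\,n-(n-2k)}\,\frac{(n-2k)!}{n!}\ =\ \lambda^{2k}\,\frac{(n-2k)!}{n!},
\]
not $\lambda^{-2k}$. This is precisely what produces the factor $(\lambda^2/16)^k$ when combined with the $16^{-k}$ from $t_{n,n-2k}^2/2^{2n}$; there is no need for ``the remaining factorials to combine into a term bounded by $\lambda^{2k}/k!$'', since the factorials carry no $\lambda$ at all. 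Your stated confusion in the middle paragraph is entirely caused by this sign slip.

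Second, your combinatorial identity loses a factor of $n$: one has
\[
  \frac{n^2(n-k-1)!^2}{n!\,(n-2k)!}\ =\ \frac{n}{n-k}\cdot\frac{(n-k)!\,(n-k-1)!}{(n-1)!\,(n-2k)!},
\]
with $(n-1)!$ in the denominator, not $n!$. The paper writes this quantity as $b_{n,k}=\dfrac{n(n-k-1)!^2}{(n-1)!(n-2k)!}$ and proves $b_{n,k}\le 2$ by checking $b_{n,0}=1$, $b_{n,1}=n/(n-1)\le 2$, and showing $b_{n,k}/b_{n,k+1}>1$ for $k\ge 1$. With the corrected $\lambda^{2k}$ one obtains
\[
  \sum_{k=0}^{\floor{n/2}}\frac{\alpha_{n-2k}}{\alpha_n}\,\frac{t_{n,n-2k}^2}{2^{2n}}
  \ \le\ \frac14\sum_{k\ge 0}b_{n,k}\,\frac{(\lambda^2/16)^k}{k!^2}
  \ \le\ \frac12\sum_{k\ge 0}\frac{(\lambda^2/16)^k}{k!}
  \ =\ \frac12\,e^{\lambda^2/16},
\]
which is exactly $\tau_2$.
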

\begin{proof}
  Equation~\eqref{eq:chebyshev-coefs} gives
  \begin{equation*}
    \begin{split}
      \sum_{k=0}^{\floor{n/2}} \alpha_{n - 2k} t_{n, n - 2k}^2 &= \alpha_n t_{n,n}^2 \sum_{k=0}^{\floor{n/2}} \frac{\alpha_{n-2k}}{\alpha_n} \bigg( \frac{t_{n, n - 2k}}{t_{n,n}} \bigg)^2 \\
      &= 2^{2n-2} \alpha_n \sum_{k=0}^{\floor{n/2}} \frac{\alpha_{n-2k}}{\alpha_n} \bigg(\frac{n (n-k-1)!}{(n-2k)!}\bigg)^2 \frac{1}{2^{4k} k!^2}.
      \end{split}
  \end{equation*}
  Denote the sum on the last line by $s_n$ and use assumption~\eqref{eq:alpha-assumption} to obtain the estimate
  \begin{equation*}
      s_n \leq \sum_{k=0}^{\floor{n/2}} \lambda^{2k} \frac{(n-2k)!}{n!}  \bigg(\frac{n (n-k-1)!}{(n-2k)!}\bigg)^2 \frac{1}{2^{4k} k!^2} = \sum_{k=0}^{\floor{n/2}} b_{n,k} \bigg( \frac{\lambda^2}{16} \bigg)^{k} \frac{1}{k!^2},
\end{equation*}
where
\begin{equation*}
  b_{n, k} = \frac{n (n-k-1)!^2}{(n-1)!(n-2k)!}.
\end{equation*}
It is straightforward to compute that $b_{n, 0} = 1$ for any $n \geq 1$ and $b_{n, 1} = n/(n-1) \leq 2$ for any $n \geq 2$.
Furthermore,
\begin{equation*}
  \frac{b_{n,k}}{b_{n,k+1}} = \frac{(n-k-1)^2}{(n-2k)(n-2k-1)} > 1
\end{equation*}
for any $n \geq 2$ if $k \geq 1$.
Therefore
\begin{equation*}
  s_n \leq 2 \sum_{k=0}^{\floor{n/2}} \bigg( \frac{\lambda^2}{16} \bigg)^{k} \frac{1}{k!^2} \leq 2 e^{\frac{1}{16} \lambda^2 },
\end{equation*}
which gives the claimed upper bound.
The lower bound follows from $s_n \geq 1$.
\end{proof}

\begin{remark} \label{rmk:alpha-assumption-unif}
From the proof of \Cref{lemma:chebyshev-c-lemma} we see that the purpose of assumption~\eqref{eq:alpha-assumption}, which states that
\begin{equation} \label{eq:alpha-assumption-2}
  \frac{\alpha_{k}}{\alpha_n} \leq \lambda^{n-k} \frac{k!}{n!}
\end{equation}
for every $n \geq 0$ and $0 \leq k \leq n$, is to ensure that the sum in~\eqref{eq:chebyshev-norm-lemma} is dominated by the highest order term $\alpha_{n} t_{n, n}^2$.
Suppose that $\alpha_k = k!^\beta$ for some $\beta > 0$ and note that~\eqref{eq:alpha-assumption-2} is satisfied if and only if $\beta \geq 1$.
Then
\begin{equation*}
  \frac{\alpha_{n - 2} t_{n, n - 2}^2}{\alpha_n t_{n, n}^2} = \frac{1}{16} \cdot \frac{n^2}{n^\beta (n-1)^\beta}
\end{equation*}
is bounded from above if and only if $\beta \geq 1$, which shows that for $\beta \in (0, 1)$ the sum in~\eqref{eq:chebyshev-norm-lemma} is \emph{not} dominated by $\alpha_{n} t_{n, n}^2$.
\end{remark}

\begin{lemma}
  \label{lemma:chebyshev-norm-bound}
  The function $f_n(x) = \varphi(x) T_n(x)$ is an element of $H(K)$.
  If assumption~\eqref{eq:alpha-assumption} holds, then
  \begin{equation*}
    \tau_1 2^{2n} \alpha_n \leq \norm[0]{f_n}_K^2 \leq \tau_2 2^{2n} \alpha_n
  \end{equation*}
  for every $n \geq 1$, where the constants $\tau_1$ and $\tau_2$ are given in~\eqref{eq:chebyshev-norm-lemma-constants}.
\end{lemma}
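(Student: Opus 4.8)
The plan is to use the explicit characterisation of $H(K)$ in~\eqref{eq:rkhs-norm} together with the norm formula already established in \Cref{lemma:chebyshev-c-lemma}. First I would write $f_n = \varphi T_n$ with $T_n(x) = \sum_{k=0}^n t_{n,k} x^k$, so that $f_n(x) = \varphi(x) \sum_{k=0}^n t_{n,k} x^k$ is manifestly of the form appearing in~\eqref{eq:rkhs-norm} with coefficients $c_k = t_{n,k}$ for $0 \leq k \leq n$ and $c_k = 0$ for $k > n$. By the characterisation~\eqref{eq:rkhs-norm}, $f_n \in H(K)$ precisely when $\sum_{k=0}^\infty \alpha_k c_k^2 < \infty$; since only finitely many $c_k$ are nonzero this sum is finite, so $f_n \in H(K)$ and moreover
\begin{equation*}
  \norm[0]{f_n}_K^2 = \sum_{k=0}^\infty \alpha_k t_{n,k}^2 = \sum_{k=0}^n \alpha_k t_{n,k}^2.
\end{equation*}

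The next step is to observe that the nonzero Chebyshev coefficients are exactly $t_{n, n-2k}$ for $0 \leq k \leq \floor{n/2}$, as recorded in~\eqref{eq:chebyshev-coefs}; all coefficients $t_{n,j}$ with $j$ of opposite parity to $n$ vanish. Hence the sum above collapses to
\begin{equation*}
  \norm[0]{f_n}_K^2 = \sum_{k=0}^{\floor{n/2}} \alpha_{n-2k} t_{n,n-2k}^2,
\end{equation*}
which is precisely the quantity bounded in~\eqref{eq:chebyshev-norm-lemma}. Applying \Cref{lemma:chebyshev-c-lemma} then yields $\tau_1 2^{2n} \alpha_n \leq \norm[0]{f_n}_K^2 \leq \tau_2 2^{2n} \alpha_n$ for every $n \geq 1$, with $\tau_1, \tau_2$ as in~\eqref{eq:chebyshev-norm-lemma-constants}, which is the assertion.

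There is essentially no obstacle here: the lemma is a direct translation of \Cref{lemma:chebyshev-c-lemma} through the description~\eqref{eq:rkhs-norm} of the reproducing kernel Hilbert space norm, once one notes that a polynomial times $\varphi$ automatically lies in $H(K)$ because the defining series has only finitely many terms. The only point requiring a word of care is the parity bookkeeping — making sure that the index set $\{n-2k : 0 \leq k \leq \floor{n/2}\}$ in~\eqref{eq:chebyshev-coefs} indeed exhausts all indices $j$ with $t_{n,j} \neq 0$ — but this is immediate from the standard fact that $T_n$ has the same parity as $n$.
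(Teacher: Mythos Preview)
Your proposal is correct and follows exactly the same approach as the paper: use the characterisation~\eqref{eq:rkhs-norm} to identify $\norm[0]{f_n}_K^2$ with $\sum_{k=0}^{\floor{n/2}} \alpha_{n-2k} t_{n,n-2k}^2$ and then apply \Cref{lemma:chebyshev-c-lemma}. The paper's proof is a one-line version of what you wrote.
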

\begin{proof}
  The claim follows from \Cref{lemma:chebyshev-c-lemma} and the Hilbert space characterisation in~\eqref{eq:rkhs-norm} that gives $\norm[0]{f_n}_K^2 = \sum_{k=0}^{\floor{n/2}} \alpha_{n-2k} t_{n, n - 2k}^2$.
\end{proof}

\begin{lemma}
  \label{lemma:unif-norm-bound}
  Let $P_n(x) = \sum_{k=0}^n a_k x^k$ for $a_k \in \R$ be a polynomial of degree $n$.
  The function $f_n(x) = \varphi(x) P_n(x)$ is an element of $H(K)$.
  If assumption~\eqref{eq:alpha-assumption} holds, then
  \begin{equation*}
    \norm[0]{f_n}_K^2 = \sum_{k=0}^n \alpha_k a_k^ 2 \leq (1 + \lambda) \, \tau_2 2^{2n} \alpha_n \norm[0]{P_n}_\infty^2
  \end{equation*}
  for every $n \geq 1$, where the constant $\tau_2$ is given in~\eqref{eq:chebyshev-norm-lemma-constants}.
\end{lemma}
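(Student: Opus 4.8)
The plan is to read off $\norm[0]{f_n}_K^2$ from the Hilbert space characterisation~\eqref{eq:rkhs-norm}, bound the individual coefficients of $P_n$ via the Markov-type inequality~\eqref{eq:general-markov-inequality}, and then recognise the resulting sum as a constant multiple of the one already estimated in \Cref{lemma:chebyshev-c-lemma}. Since $P_n$ is a polynomial, the weighted power series $f_n = \varphi P_n$ has only finitely many non-zero coefficients, so~\eqref{eq:rkhs-norm} immediately gives that $f_n \in H(K)$ and that $\norm[0]{f_n}_K^2 = \sum_{k=0}^n \alpha_k a_k^2$.

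After disposing of the trivial case $\norm[0]{P_n}_\infty = 0$, I would apply~\eqref{eq:general-markov-inequality} to the normalised polynomial $P_n / \norm[0]{P_n}_\infty$, whose absolute value on $[-1,1]$, and hence its value at each extremum $\tilde{x}_{n,k}$, is at most $1$. This yields
\begin{equation*}
  \abs[0]{a_{n-2k}} + \abs[0]{a_{n-2k-1}} \leq \abs[0]{t_{n,n-2k}} \norm[0]{P_n}_\infty \quad \text{ for } \quad 0 \leq k \leq \floor[\bigg]{\frac{n-1}{2}},
\end{equation*}
and in particular each of $a_{n-2k}^2$ and $a_{n-2k-1}^2$ is at most $t_{n,n-2k}^2 \norm[0]{P_n}_\infty^2$.

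Next I would pair the terms of $\sum_{k=0}^n \alpha_k a_k^2$ as $\alpha_{n-2k} a_{n-2k}^2 + \alpha_{n-2k-1} a_{n-2k-1}^2$ for $0 \leq k \leq \floor{(n-1)/2}$; these pairs exhaust all indices $0,\dots,n$ when $n$ is odd and leave only $a_0$ unpaired when $n$ is even. Assumption~\eqref{eq:alpha-assumption}, applied with its $n$ and $k$ replaced by $n-2k$ and $n-2k-1$, gives $\alpha_{n-2k-1} \leq (\lambda/(n-2k)) \alpha_{n-2k} \leq \lambda \alpha_{n-2k}$ because $n-2k \geq 1$, so each pair is at most $(1+\lambda) \alpha_{n-2k} t_{n,n-2k}^2 \norm[0]{P_n}_\infty^2$. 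For even $n$ the leftover term satisfies $\alpha_0 a_0^2 \leq \alpha_0 \norm[0]{P_n}_\infty^2 = \alpha_0 t_{n,0}^2 \norm[0]{P_n}_\infty^2$, since $a_0 = P_n(0)$ and $t_{n,0}^2 = 1$ by~\eqref{eq:chebyshev-coefs}. Adding everything up, the indices $n-2k$ with $0 \leq k \leq \floor{n/2}$ that appear are exactly those in \Cref{lemma:chebyshev-c-lemma}, so
\begin{equation*}
  \norm[0]{f_n}_K^2 \leq (1+\lambda) \norm[0]{P_n}_\infty^2 \sum_{k=0}^{\floor{n/2}} \alpha_{n-2k} t_{n,n-2k}^2 \leq (1+\lambda) \, \tau_2 \, 2^{2n} \alpha_n \norm[0]{P_n}_\infty^2,
\end{equation*}
which is the claim.

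I expect the main obstacle to be purely organisational: aligning the index range $0 \leq k \leq \floor{(n-1)/2}$ over which~\eqref{eq:general-markov-inequality} yields information with the range $0 \leq k \leq \floor{n/2}$ of \Cref{lemma:chebyshev-c-lemma}, and in particular treating the coefficient $a_0$ for even $n$, which~\eqref{eq:general-markov-inequality} does not control and which must be bounded directly via $\abs[0]{a_0} = \abs[0]{P_n(0)} \leq \norm[0]{P_n}_\infty$. The step that produces the factor $1+\lambda$, namely the comparison $\alpha_{n-2k-1} \leq \lambda \alpha_{n-2k}$ extracted from~\eqref{eq:alpha-assumption}, should be checked carefully but is otherwise routine.
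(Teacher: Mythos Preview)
Your proposal is correct and follows essentially the same argument as the paper: normalise so that $\norm[0]{P_n}_\infty \leq 1$, apply the Markov inequality~\eqref{eq:general-markov-inequality} to control the coefficients by the Chebyshev coefficients, use~\eqref{eq:alpha-assumption} to trade $\alpha_{n-2k-1}$ for $\lambda\,\alpha_{n-2k}$, handle $a_0$ separately when $n$ is even via $\abs[0]{a_0}=\abs[0]{P_n(0)}\leq\norm[0]{P_n}_\infty=t_{n,0}^2$, and then invoke \Cref{lemma:chebyshev-c-lemma}. The only cosmetic difference is that the paper splits $\sum_k \alpha_k a_k^2$ into two separate sums over even and odd indices rather than pairing terms, but the content is identical.
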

\begin{proof}
  Because the polynomial $Q_n = P_n / \lVert P_n \rVert_\infty$ has the coefficients $a_k / \lVert P_n \rVert_\infty$, we may assume that $\norm[0]{P_n}_\infty \leq 1$.
  In particular, $\abs[0]{P_n(\tilde{x}_{n,k})} \leq 1$ for every $0 \leq k \leq n$, where $\tilde{x}_{n,k}$ are extremal points of $T_n$ in~\eqref{eq:chebyshev-extrema}.
  We get from~\eqref{eq:general-markov-inequality} that
  \begin{equation*}
    \abs[0]{a_{n-2k}} + \abs[0]{a_{n-2k-1}} \leq \abs[0]{t_{n,n-2k}} \quad \text{ for every } \quad 0 \leq k \leq \floor[\bigg]{\frac{n-1}{2}}.
  \end{equation*}
  Furthermore, it is obvious that $\abs[0]{a_0} \leq \norm[0]{P_n}_\infty \leq t_{n, 0} = 1$ when $n$ is even.
  These inequalities and the characterisation~\eqref{eq:rkhs-norm} give
  \begin{equation*}
    \begin{split}
      \norm[0]{f_n}_K^2 = \sum_{k=0}^n \alpha_k a_k^2 &= \sum_{k=0}^{\floor{n/2}} \alpha_{n - 2k} a_{n - 2k}^2 + \sum_{k=0}^{\floor{(n-1)/2}} \alpha_{n - 2k -1} a_{n - 2k - 1}^2 \\
      &\leq \sum_{k=0}^{\floor{n/2}} \alpha_{n - 2k} t_{n, n - 2k}^2 + \sum_{k=0}^{\floor{(n-1)/2}} \alpha_{n - 2k -1} t_{n, n - 2k}^2.
      \end{split}
  \end{equation*}
  From assumption~\eqref{eq:alpha-assumption} we get
  \begin{equation*}
      \sum_{k=0}^{\floor{(n-1)/2}} \alpha_{n - 2k -1} t_{n, n - 2k}^2 \leq \lambda \sum_{k=0}^{\floor{(n-1)/2}} \frac{1}{n-2k} \alpha_{n - 2k} t_{n, n - 2k}^2 \leq \lambda \sum_{k=0}^{\floor{n/2}} \alpha_{n - 2k} t_{n, n - 2k}^2,
  \end{equation*}
  so that the claim follows from \Cref{lemma:chebyshev-c-lemma}.
\end{proof}

With these tools at hand we are ready to bound the worst-case errors from below.

\begin{proof}[Proofs of lower bounds in \Cref{thm:main-3} for $p=\infty$]
Let $P_n(x) = (x - x_1) \cdots (x - x_n)$.
This is a polynomial of degree $n$ that vanishes at the points $x_1, \ldots, x_n \in [-1, 1]$.
The function $f_n(x) = \varphi(x) P_n(x)$ is an element of $H(K)$ that vanishes at the same points.
Therefore $\sum_{k=1}^n f_n(x_k) \psi_k \equiv 0$ and thus 
\begin{equation*}
        e_\infty^{\min}(x_1, \ldots, x_n) = \inf_{\psi_1, \ldots, \psi_n \in L^\infty} \sup_{0 \neq f \in H(K)} \frac{ \lVert f - \sum_{k=1}^n f(x_k) \psi_k \rVert_\infty }{\lVert f \rVert_{K}} \geq \frac{\lVert f_n \rVert_\infty}{\lVert f_n \rVert_{K}} .
\end{equation*}
  The lower bounds in~\eqref{eq:main-31} and~\eqref{eq:main-32} for $p = \infty$ now follow from \Cref{lemma:unif-norm-bound} and
  \begin{equation*}
    \norm[0]{f_n}_\infty = \sup_{x \in [-1, 1]} \abs[0]{\varphi(x) P_n(x)} \geq \varphi_\textup{min} \norm[0]{P_n}_\infty.
  \end{equation*}
Observe that $\lVert P_n \rVert_\infty \leq 2^n$ since $\lvert x - y \lvert \leq 2$ for any $x, y \in [-1, 1]$.
Therefore
\begin{equation*}
        e^{\min}(x \mid x_1, \ldots, x_n) \geq \frac{\lvert f_n(x) \rvert}{\lVert f_n \rVert_K} \geq \frac{\varphi_\textup{min}}{\lVert f_n \rVert_K} \bigg\lvert \prod_{k=1}^n (x - x_n) \bigg\rvert \geq \frac{\varphi_\textup{min}\lVert P_n \rVert_\infty }{2^n \lVert f_n \rVert_K} \bigg\lvert \prod_{k=1}^n (x - x_n) \bigg\rvert,
\end{equation*}
so that the lower bound in \eqref{eq:main-33} follows from \Cref{lemma:unif-norm-bound}.
\end{proof}

\subsection{Lower bounds for approximation in $L^2([-1, 1])$} \label{sec:lower-L2}

In this section we prove lower bounds on $e_2^{\min}(x_1, \ldots, x_n)$ and $e_2^{\min}(n)$.
The technique is identical to the one that we used for $p = \infty$.
Define
\begin{equation} \label{eq:lnk-definition}
  l_{n,k} = \frac{(2k-1)!!}{k!} \sqrt{\smash[b]{k+\tfrac{1}{2}}} \binom{\floor{(n-k)/2} + k + \frac{1}{2}}{\floor{(n-k)/2}}
\end{equation}
for any $0 \leq k \leq n$.
Here $(2k-1)!! = (2k-1)(2k-3) \cdots 3 \cdot 1$ is the double factorial and 
\begin{equation*}
  \binom{r}{k} = \frac{r(r-1) \cdots (r-k+1)}{k!}
\end{equation*}
the generalised binomial coefficient defined for any $r \in \R$ and $k \in \N_0$. 
In this section the constants $l_{n,k}$ play an analogous role to that played by $t_{n, n - 2k}$ in \Cref{sec:lower-Linf} because Labelle~\cite{Labelle1969} has proved an $L^2$-version of the polynomial coefficient bound~\eqref{eq:general-markov-inequality} that involves $l_{n,k}$.
Namely, if $P_n(x) = \sum_{k=0}^n a_k x^k$ is a polynomial of degree $n$ with real coefficients $a_k$, then
\begin{equation} \label{eq:labelle-bound}
  \abs[0]{a_k} \leq l_{n,k} \norm[0]{P_n}_{2} \quad \text{ for every } \quad 0 \leq k \leq n.
\end{equation}
Equality in~\eqref{eq:labelle-bound} holds only for certain sums of Legendre polynomials, but we shall not use this fact.
See also~\cite[pp.\@~676--7]{RahmanSchmeisser2002}.

\begin{lemma} \label{lemma:legendre-c-lemma}
  Suppose that~\eqref{eq:alpha-assumption} holds.
  Then for every $n \geq 1$ we have
  \begin{equation*}
    \ell_1 2^{2n} \alpha_n \leq \sum_{k=0}^n \alpha_k l_{n,k}^2 \leq \ell_2 2^{2n} \alpha_n,
  \end{equation*}
  where
  \begin{equation} \label{eq:c-ell-L2}
    \ell_1 = \frac{1}{2\pi e^2} \quad \text{ and } \quad \ell_2 = \frac{27e^6}{2\pi} \bigg(1 + \frac{\lambda}{4} \bigg) e^{\frac{1}{16} \lambda^2}.
  \end{equation}
\end{lemma}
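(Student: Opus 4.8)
The plan is to mimic the proof of \Cref{lemma:chebyshev-c-lemma}: isolate the top-order term $\alpha_n l_{n,n}^2$, show that it is of the same order as $2^{2n}\alpha_n$, and use assumption~\eqref{eq:alpha-assumption} to dominate the remaining terms by an $n$-independent convergent series.

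First I would rewrite $l_{n,k}$ in a more workable form. With $m = \floor{(n-k)/2}$ one has $(2k-1)!!/k! = 2^{-k}\binom{2k}{k}$ and $\binom{m+k+1/2}{m} = \frac{1}{m!}\prod_{i=1}^m(k+i+\tfrac12)$, so
\begin{equation*}
  l_{n,k} = 2^{-k}\binom{2k}{k}\sqrt{\smash[b]{k+\tfrac12}}\;\frac{1}{m!}\prod_{i=1}^m\Big(k+i+\tfrac12\Big), \qquad l_{n,n} = 2^{-n}\binom{2n}{n}\sqrt{\smash[b]{n+\tfrac12}},
\end{equation*}
the latter being the leading coefficient of the degree-$n$ Legendre polynomial divided by its $L^2([-1,1])$-norm. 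A Wallis-type estimate such as $\frac{1}{\sqrt{\pi(n+1/2)}} < \frac{(2n-1)!!}{(2n)!!} < \frac{1}{\sqrt{\pi n}}$ then gives $c_1' \, 2^{2n} \le l_{n,n}^2 \le c_2' \, 2^{2n}$ for every $n \ge 1$ with explicit constants $c_1', c_2'$. Since $\sum_{k=0}^n \alpha_k l_{n,k}^2 \ge \alpha_n l_{n,n}^2$, this already delivers the lower bound $\ell_1 2^{2n}\alpha_n \le \sum_{k=0}^n \alpha_k l_{n,k}^2$.

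For the upper bound I would factor $\sum_{k=0}^n \alpha_k l_{n,k}^2 = \alpha_n l_{n,n}^2 \, \tilde{s}_n$ and bound $\tilde{s}_n = \sum_{j=0}^n \frac{\alpha_{n-j}}{\alpha_n}\frac{l_{n,n-j}^2}{l_{n,n}^2}$, splitting $j$ into even ($j = 2m$, so that $k = n-2m$) and odd ($j = 2m+1$) parts. For even $j$ the quotient $l_{n,n-2m}/l_{n,n}$ factors as $\big(\prod_{i=k+1}^n \tfrac{i}{2i-1}\big) \sqrt{\smash[b]{(k+1/2)/(n+1/2)}} \, \frac{1}{m!}\prod_{i=1}^m(k+i+\tfrac12)$, and the key point is that $\big(\prod_{i=k+1}^n \tfrac{i}{2i-1}\big)^2 \frac{k+1/2}{n+1/2}$ equals $16^{-m}$ up to a bounded factor: the $\sqrt{n/k}$-type growth of $\prod_{i=k+1}^n \tfrac{i}{2i-1}$, which is again controlled by the Wallis inequality, is cancelled by $\sqrt{\smash[b]{(k+1/2)/(n+1/2)}}$. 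Feeding this into~\eqref{eq:alpha-assumption} in the form $\alpha_{n-2m}/\alpha_n \le \lambda^{2m}(n-2m)!/n!$ and pairing $\big(\prod_{i=1}^m(k+i+\tfrac12)\big)^2$ factor by factor against $\prod_{j=n-2m+1}^n j$ reduces the even part to $\sum_m (\lambda^2/16)^m / m!^2 \le e^{\lambda^2/16}$ up to an absolute constant, exactly as in \Cref{lemma:chebyshev-c-lemma}. The odd terms produce essentially the same sum carrying the extra factor $\alpha_{n-2m-1}/\alpha_{n-2m} \le \lambda/(n-2m)$ together with a ratio $(l_{n,n-2m-1}/l_{n,n-2m})^2$ that telescopes to something $\le 1$, which is responsible for the additional $(1+\lambda/4)$ in $\ell_2$. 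Assembling the bounds on $l_{n,n}^2$ and on $\tilde{s}_n$ then yields the stated $\ell_1$ and $\ell_2$.

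The main obstacle is keeping the constants and the $n$-dependence under control in the even part: one must check that the three $n$-dependent products --- the telescoping $\prod_{i=k+1}^n \tfrac{i}{2i-1}$, the square $\big(\prod_{i=1}^m(k+i+\tfrac12)\big)^2$, and $(n-2m)!/n!$ --- conspire to leave only $16^{-m}/m!^2$ times a bounded quantity, uniformly over $0 \le 2m \le n$. The half-integer shifts coming from the generalised binomial coefficient make this bookkeeping noticeably more delicate than the purely factorial computation in \Cref{lemma:chebyshev-c-lemma}, and the boundary case $k = 0$ (i.e.\ $n$ even and $m = n/2$) must be handled separately, though that term is negligibly small.
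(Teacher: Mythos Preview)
Your proposal is correct and follows essentially the same strategy as the paper: isolate $\alpha_n l_{n,n}^2$ via a Wallis/Stirling estimate for the lower bound, then use assumption~\eqref{eq:alpha-assumption} and show that the factorial quotient $k!/n!$ combined with $\bigl(\prod_{i=1}^m(k+i+\tfrac12)\bigr)^2$ collapses to $1/m!^2$ up to a bounded constant. The only organisational difference is timing: the paper keeps even and odd $n-k$ together, proving uniformly that $b_{n,k} = \frac{k!}{n!}\binom{\lfloor(n-k)/2\rfloor+k+\tfrac12}{\lfloor(n-k)/2\rfloor}^2 \le 3/\lfloor(n-k)/2\rfloor!^2$, and only splits by parity in the final summation $\sum_k (\lambda/4)^{k}/\lfloor k/2\rfloor!^2 \le (1+\lambda/4)e^{\lambda^2/16}$, whereas you split at the outset; also, the boundary cases the paper treats separately are $m=0$ (i.e.\ $k=n$ and $k=n-1$), not $k=0$.
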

\begin{proof}
  For $k \geq 1$, the non-asymptotic Stirling's formula,
\begin{equation} \label{eq:stirling}
  \sqrt{2\pi} \, n^{n+1/2} e^{-n} < n! < \sqrt{2\pi} \, n^{n+1/2} e^{-n+1} ,
\end{equation}
and the double factorial identity $(2k-1)!! = (2k)!/(2^k k!)$ yield
  \begin{equation} \label{eq:d-fact-upper}
    \bigg( \frac{(2k-1)!!}{k!}\bigg)^2 (k+\tfrac{1}{2}) = \frac{(2k)!^2}{2^{2k} k!^4} (k+\tfrac{1}{2}) \leq 2^{2k} \frac{e^2}{\pi} \cdot \frac{k+\frac{1}{2}}{k} \leq \frac{3e^2}{2\pi} \cdot 2^{2k} 
  \end{equation}
  and
  \begin{equation} \label{eq:d-fact-lower}
    \bigg( \frac{(2k-1)!!}{k!}\bigg)^2 (k + \tfrac{1}{2}) \geq 2^{2k} \frac{1}{\pi e^2} \cdot \frac{k+\frac{1}{2}}{k} \geq \frac{1}{2\pi e^2} \cdot 2^{2k}.
  \end{equation}
  It is trivial to verify that these inequalities hold also for $k = 0$.
  The definition of $l_{n,k}$ in~\eqref{eq:lnk-definition} thus gives
  \begin{equation*}
   \frac{1}{2\pi e^2} \cdot 2^{2k} \binom{\floor{(n-k)/2} + k + \frac{1}{2}}{\floor{(n-k)/2}}^2 \leq l_{n, k}^2 \leq \frac{3e^2}{2\pi} \cdot 2^{2k} \binom{\floor{(n-k)/2} + k + \frac{1}{2}}{\floor{(n-k)/2}}^2.
  \end{equation*}
  Note that the generalised binomial coefficient equals one when $\floor{(n-k)/2} = 0$, so that
  \begin{equation*}
    \frac{1}{2\pi e^2} \cdot 2^{2n} \leq l_{n, n}^2 = \bigg( \frac{(2n-1)!!}{n!}\bigg)^2 (n + \tfrac{1}{2}) \leq \frac{3e^2}{2\pi} \cdot 2^{2n}
  \end{equation*}
  by~\eqref{eq:d-fact-upper} and~\eqref{eq:d-fact-lower} and $k + \tfrac{1}{2}$
  We now get
  \begin{equation*}
    \sum_{k=0}^n \alpha_k l_{n,k}^2 \geq \alpha_n l_{n,n}^2 \geq \frac{1}{2\pi e^2} \cdot 2^{2n} \alpha_n,
  \end{equation*}
  which is the claimed lower bound, and
  \begin{equation*}
    \begin{split}
    \sum_{k=0}^n \alpha_k l_{n,k}^2 &= \alpha_n l_{n,n}^2 \sum_{k=0}^n \frac{\alpha_k}{\alpha_n} \bigg( \frac{l_{n,k}}{l_{n,n}} \bigg)^2 \\
    &\leq \frac{3e^2}{2\pi} \cdot 2^{2n} \alpha_n \sum_{k=0}^n \frac{\alpha_k}{\alpha_n} \cdot \frac{3e^2}{2\pi} \cdot 2\pi e^2 \cdot 2^{-2(n-k)} \binom{\floor{(n-k)/2} + k + \frac{1}{2}}{\floor{(n-k)/2}}^2 \\
    &= \frac{9e^6}{2\pi} \cdot 2^{2n} \alpha_n \sum_{k=0}^n \frac{\alpha_k}{\alpha_n} \cdot 2^{-2(n-k)} \binom{\floor{(n-k)/2} + k + \frac{1}{2}}{\floor{(n-k)/2}}^2.
    \end{split}
  \end{equation*}
  Assumption~\eqref{eq:alpha-assumption} therefore gives
  \begin{equation} \label{eq:s-l2-1}
    \sum_{k=0}^n \alpha_k l_{n,k}^2 \leq \frac{9e^6}{2\pi} \cdot 2^{2n} \alpha_n \sum_{k=0}^n \bigg( \frac{\lambda}{4} \bigg)^{n-k} \frac{k!}{n!} \binom{\floor{(n-k)/2} + k + \frac{1}{2}}{ \floor{(n-k)/2}}^2.
  \end{equation}
  We now show that the term
  \begin{equation*}
    b_{n, k} = \frac{k!}{n!} \binom{\floor{(n-k)/2} + k + \frac{1}{2}}{\floor{(n-k)/2}}^2
  \end{equation*}
  is approximately $(n - k)!$.
  Denote $p_{n,k} = \floor{(n-k)/2}$ and suppose that $p_{n,k} \geq 1$.
  Then
  \begin{align*}
      b_{n, k} &= \frac{k!}{n!} \cdot \frac{\prod_{\nu = 1}^{p(n,k)} (\nu + k + \frac{1}{2})^2}{p(n,k)!^2} \\
    &= \frac{1}{ p(n,k)!^2 } \cdot \frac{\prod_{\nu = 1}^{p(n,k)} (\nu + k + \frac{1}{2})^2}{\prod_{\nu=1}^{n-k} (\nu + k)}. \\
          &= \frac{1}{ p(n,k)!^2 } \cdot \frac{\prod_{\nu = 1}^{p(n,k)} (\nu + k + \frac{1}{2})}{\prod_{\nu=1}^{p(n,k)} (\nu + k)} \cdot \frac{\prod_{\nu = 1}^{p(n,k)} (\nu + k + \frac{1}{2})}{\prod_{\nu=p(n,k)+1}^{n-k} (\nu + k)} \\
    &= \frac{1}{ p(n,k)!^2 } \cdot \frac{\prod_{\nu = 2}^{p(n,k)} (\nu + k - \frac{1}{2})}{\prod_{\nu=2}^{p(n,k)} (\nu + k)} \cdot \frac{p(n,k) + k + \frac{1}{2}}{k + 1} \cdot \frac{\prod_{\nu = 1}^{p(n,k)} (\nu + k + \frac{1}{2})}{\prod_{\nu=p(n,k)+1}^{n-k} (\nu + k)} \\
      &\leq \frac{1}{ p(n,k)!^2 } \cdot \frac{p(n,k) + k + \frac{1}{2}}{k + 1} \cdot \frac{\prod_{\nu = 1}^{p(n,k)} (\nu + k + \frac{1}{2})}{\prod_{\nu=p(n,k)+1}^{n-k} (\nu + k)} \\
      &= \frac{1}{ p(n,k)!^2 } \cdot \frac{p(n,k) + k + \frac{1}{2}}{k + 1} \cdot \frac{k + \frac{3}{2}}{n} \cdot \frac{\prod_{\nu = 2}^{p(n,k)} (\nu + k + \frac{1}{2})}{\prod_{\nu=p(n,k)+1}^{n-k-1} (\nu + k)} \\
      &\leq \frac{3}{ p(n,k)!^2 } \cdot \frac{\prod_{\nu = 2}^{p(n,k)} (\nu + k + \frac{1}{2})}{\prod_{\nu=p(n,k)+1}^{n-k-1} (\nu + k)},
  \end{align*}
  where the last inequality is a consequence of the inequalities $(p(n,k) + k + \frac{1}{2})/n \leq \frac{3}{2}$ and $(k + \frac{3}{2}) / (k+1) \leq 2$.
  Now, in the second term on the last line, each term in the denominator is larger than any term in the numerator and there are at least as many terms in the denominator as there are in the numerator.
  Therefore,
  \begin{equation} \label{eq:b-estimate-l2}
    b_{n, k} \leq \frac{3}{ p(n,k)!^2 } = \frac{3}{\floor{(n-k)/2}!^2}
  \end{equation}
  if $n-k \geq 2$. By computing $b_{n,k}$ in the cases $k = n$ and $k = n - 1$ it is trivial to check that this inequality is valid for any $0 \leq k \leq n$.
  Using the estimate~\eqref{eq:b-estimate-l2} in~\eqref{eq:s-l2-1} yields
  \begin{equation*}
    \begin{split}
      \sum_{k=0}^n \alpha_k l_{n,k}^2 \leq \frac{9e^6}{2\pi} \cdot 2^{2n} \alpha_n \sum_{k=0}^n \bigg( \frac{\lambda}{4} \bigg)^{n-k} b_{n,k} &\leq \frac{27e^6}{2\pi} \cdot 2^{2n} \alpha_n \sum_{k=0}^n \bigg( \frac{\lambda}{4} \bigg)^{k} \frac{1}{\floor{k/2}!^2} \\
      &\leq \frac{27e^6}{2\pi} \cdot 2^{2n} \alpha_n \sum_{k=0}^n \bigg( \frac{\lambda}{4} \bigg)^{2k} \bigg(1 + \frac{\lambda}{4} \bigg) \frac{1}{k!^2} \\
      &\leq \frac{27e^6}{2\pi} \bigg(1 + \frac{\lambda}{4} \bigg) e^{\frac{1}{16} \lambda^2} \cdot 2^{2n} \alpha_n.
    \end{split}
  \end{equation*}
  This completes the proof.
\end{proof}

\begin{remark} \label{rmk:alpha-assumption-l2}
  Just as the proof of \Cref{lemma:chebyshev-c-lemma} (see \Cref{rmk:alpha-assumption-unif}), the proof of \Cref{lemma:legendre-c-lemma} is based on the fact that assumption~\eqref{eq:alpha-assumption} ensures that the term $\alpha_n l_{n,n}^2$ dominates the sum in~\eqref{eq:chebyshev-norm-lemma}.
  Suppose that $\alpha_k = k!^\beta$ for some $\beta > 0$ and note that~\eqref{eq:alpha-assumption} is satisfied if and only if $\beta \geq 1$.
  Then Stirling's formula yields
  \begin{equation*}
    \begin{split}
      \frac{\alpha_{n-2} l_{n, n-2}^2}{\alpha_n l_{n,n}^2} &= \bigg( \frac{(n-2)!}{n!} \bigg)^\beta \bigg( \frac{(2n-5)!! \, n!}{(2n-1)!! \, (n-2)!} \bigg)^2 \frac{n - \frac{3}{2}}{n + \frac{1}{2}} \binom{n - \frac{1}{2}}{1}^2 \\
      &\sim \bigg( \frac{1}{n(n-1)} \bigg)^\beta \frac{1}{16} \cdot \frac{n}{n-2} \cdot \frac{n - \frac{3}{2}}{n + \frac{1}{2}} \cdot (n - \tfrac{1}{2})^2 \\
      &\sim \frac{1}{16} \cdot n^{2(1 - \beta)}
      \end{split}
  \end{equation*}
  as $n \to \infty$.
  That is, the ratio is bounded from above if and only if $\beta \geq 1$, which shows that for $\beta \in (0, 1)$ the sum in~\eqref{eq:chebyshev-norm-lemma} is \emph{not} dominated by $\alpha_{n} l_{n, n}^2$.
\end{remark}

\begin{lemma} \label{lemma:l2-norm-bound}
  Let $P_n(x) = \sum_{k=0}^n a_k x^k$ for $a_k \in \R$ be a polynomial of degree $n$.
  The function $f_n(x) = \varphi(x) P_n(x)$ is an element of $H(K)$.
  If assumption~\eqref{eq:alpha-assumption} holds, then
  \begin{equation*}
    \norm[0]{f_n}_K^2 = \sum_{k=0}^n \alpha_k a_k^ 2 \leq \ell_2 2^{2n} \alpha_n \norm[0]{P_n}_{2}^2
  \end{equation*}
  for every $n \geq 1$, where the constant $\ell_2$ is given in~\eqref{eq:c-ell-L2}.
\end{lemma}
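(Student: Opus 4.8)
The plan is to mimic the structure of the proof of Lemma~\ref{lemma:unif-norm-bound}, replacing the Markov coefficient inequality~\eqref{eq:general-markov-inequality} by Labelle's $L^2$-bound~\eqref{eq:labelle-bound} and the constant lemma~\ref{lemma:chebyshev-c-lemma} by its $L^2$-counterpart~\ref{lemma:legendre-c-lemma}. First I would note that $f_n \in H(K)$ is immediate from the characterisation~\eqref{eq:rkhs-norm} since $f_n = \varphi P_n$ with $P_n$ a polynomial, so all but finitely many coefficients vanish and the norm is a finite sum. Then, normalising so that $\norm[0]{P_n}_2 \leq 1$ without loss of generality (the general case follows by scaling), I would apply Labelle's inequality~\eqref{eq:labelle-bound} termwise: $\abs[0]{a_k} \leq l_{n,k}\norm[0]{P_n}_2 \leq l_{n,k}$ for every $0 \leq k \leq n$.

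Next, using the Hilbert space characterisation~\eqref{eq:rkhs-norm},
\begin{equation*}
  \norm[0]{f_n}_K^2 = \sum_{k=0}^n \alpha_k a_k^2 \leq \sum_{k=0}^n \alpha_k l_{n,k}^2.
\end{equation*}
The right-hand side is exactly the quantity bounded above in \Cref{lemma:legendre-c-lemma}, which under assumption~\eqref{eq:alpha-assumption} gives $\sum_{k=0}^n \alpha_k l_{n,k}^2 \leq \ell_2 2^{2n}\alpha_n$ for every $n \geq 1$. Restoring the normalisation yields $\norm[0]{f_n}_K^2 \leq \ell_2 2^{2n}\alpha_n \norm[0]{P_n}_2^2$, which is the claim.

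There is really no substantive obstacle here: all the hard work---establishing that the highest-order term $\alpha_n l_{n,n}^2$ dominates the sum, and controlling the lower-order terms via assumption~\eqref{eq:alpha-assumption} and the estimate~\eqref{eq:b-estimate-l2} on $b_{n,k}$---has already been carried out in \Cref{lemma:legendre-c-lemma}. The only point requiring a moment's care is that, unlike the uniform case where~\eqref{eq:general-markov-inequality} bounds sums $\abs[0]{a_{n-2k}}+\abs[0]{a_{n-2k-1}}$ of pairs of coefficients and hence forces the extra $(1+\lambda)$ factor in \Cref{lemma:unif-norm-bound}, Labelle's bound~\eqref{eq:labelle-bound} controls each $\abs[0]{a_k}$ individually; consequently the clean single sum $\sum_{k=0}^n \alpha_k l_{n,k}^2$ appears directly and no analogue of the splitting-into-even-and-odd-indices argument from the proof of \Cref{lemma:unif-norm-bound} is needed, and the constant is just $\ell_2$ rather than $(1+\lambda)\tau_2$. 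So the proof is a two-line application of \eqref{eq:labelle-bound}, \eqref{eq:rkhs-norm} and \Cref{lemma:legendre-c-lemma}.
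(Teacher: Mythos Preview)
Your proposal is correct and matches the paper's proof essentially line for line: the paper also writes $\norm[0]{f_n}_K^2 = \sum_{k=0}^n \alpha_k a_k^2 \leq \norm[0]{P_n}_2^2 \sum_{k=0}^n \alpha_k l_{n,k}^2$ via~\eqref{eq:rkhs-norm} and~\eqref{eq:labelle-bound}, then invokes \Cref{lemma:legendre-c-lemma}. Your observation that no $(1+\lambda)$ factor or even/odd splitting is needed here, in contrast to \Cref{lemma:unif-norm-bound}, is also accurate.
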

\begin{proof}
  The characterisation~\eqref{eq:rkhs-norm} and inequality~\eqref{eq:labelle-bound} give
  \begin{equation*}
    \norm[0]{f_n}_K^2 = \sum_{k=0}^n \alpha_k a_k^2 \leq \norm[0]{P_n}_{2}^2 \sum_{k=0}^n \alpha_k l_{n,k}^2.
  \end{equation*}
  The claim then follows from \Cref{lemma:legendre-c-lemma}.
\end{proof}

With these tools at hand we are ready to bound the worst-case errors from below.

\begin{proof}[Proofs of lower bounds in \Cref{thm:main-3} for $p=2$]
The proof is identical to that for the case $p = \infty$, except that we now use \Cref{lemma:l2-norm-bound} to estimate $\lVert f_n \rVert_K$.
\end{proof}

\subsection{Upper bounds} \label{sec:upper-bounds}

To bound the worst-case errors from above we shall use weighted polynomial interpolation.
Upper bounds on derivatives of elements of $H(K)$ are needed for this purpose.
The following results closely resemble certain results in scattered data approximation literature, particularly in~\cite{Zwicknagl2009}.
Define
\begin{equation} \label{eq:CK2n-definition}
  C_K^{2n} = \sup_{x \in [-1, 1]} \frac{\partial^{2n}}{\partial v^n \partial w^n} K(v, w) \biggl|_{\substack{v = x \\ w = x}}.
\end{equation}

\begin{lemma} \label{lemma:rkhs-norm-derivative}
  Let $K$ be any positive-semidefinite kernel on $[-1, 1] \times [-1, 1]$ that is infinitely differentiable in both arguments.
  Then every $f \in H(K)$ is infinitely differentiable and
  \begin{equation*}
    \norm[0]{f^{(n)}}_\infty \leq \norm[0]{f}_K (C_K^{2n})^{1/2}
  \end{equation*}
  for every $n \geq 0$.
\end{lemma}
\begin{proof}
  It is well known that, for a kernel $K$ which is $n$ times differentiable with respect to both of its arguments, every $f \in H(K)$ is $n$ times differentiable and the reproducing property $\inprod{f}{K(x, \cdot)}_K = f(x)$ extends to differentiation~\cite[Cor.\@~4.36]{Steinwart2008}.
  The Cauchy--Schwarz inequality thus gives
  \begin{equation*}
    \abs[0]{f^{(n)}(x)} = \abs[3]{ \inprod[\bigg]{f}{\frac{\partial^n}{\partial x^n} K(x, \cdot)}_K } \leq \norm[0]{f}_K \, \norm[3]{ \frac{\partial^n}{\partial x^n} K(x, \cdot)}_K
  \end{equation*}
  for every $x \in [-1, 1]$.
  Again by the derivative reproducing property,
  \begin{equation*}
    \norm[3]{ \frac{\partial^n}{\partial x^n} K(x, \cdot)}_K^2 = \frac{\partial^{2n}}{\partial v^n \partial w^n} K(v, w) \biggl|_{\substack{v = x \\ w = x}}.
  \end{equation*}
  The claim follows.
\end{proof}

The Laguerre polynomial of degree $n \geq 0$ is defined as
\begin{equation} \label{eq:laguerre-polynomial}
  L_n(x) = \sum_{k=0}^n \binom{n}{k} \frac{(-x)^k}{k!} = \frac{e^x}{n!} \cdot \frac{\dif^{\,n}}{\dif x^n} (e^{-x} x^n).
\end{equation}
We make use of the following effective asymptotics for Laguerre polynomials from~\cite{Borwein2008}.

\begin{proposition} \label{prop:laguerre}
  Let $x > 0$.
  Define
  \begin{equation*}
    \tilde{c}_L(x) = \frac{e^{-\frac{1}{2} x + 2\sqrt{\smash[b]{x}}}}{2\sqrt{\pi} x^{1/4}} \bigg( 3 + \sqrt{\frac{\pi}{e}} + \frac{2}{\sqrt{\pi} x^{1/4}} + \frac{7}{16 \sqrt{x}} + \sqrt{x} (2 + x) \bigg)
  \end{equation*}
  and
  \begin{equation*}
    \tilde{m}_L(x) = \max\bigg\{ 5x, x\bigg(1+\frac{x}{2} \bigg)^2, \frac{9}{x} \bigg\}.
  \end{equation*}
  Then
  \begin{equation} \label{eq:laguerre-bound}
    L_n(-x) = \sum_{k=0}^n \binom{n}{k} \frac{x^k}{k!} \leq \tilde{c}_L(x) \, n^{-1/4} e^{2\sqrt{\smash[b]{x n}}}
  \end{equation}
  for every $n \geq \tilde{m}_L(x)$.
\end{proposition}
\begin{proof}
  Setting $a = 0$ in Section~6.3 of \cite{Borwein2008} gives
  \begin{equation} \label{eq:laguerre-bound-initial}
    \begin{split}
      L_n(-x) &\leq \frac{e^{-\frac{1}{2} x}}{2\sqrt{\pi}} \cdot \frac{e^{2\sqrt{\smash[b]{x (n+1)}}}}{x^{1/4} (n + 1)^{1/4}}\bigg(1 + \frac{\overline{C}_1}{\sqrt{n+1}} + \mathcal{E}_1 + \mathcal{E}_{3,1} \bigg) \\
      &\leq \frac{e^{-\frac{1}{2} x + 2\sqrt{\smash[b]{x}}}}{2\sqrt{\pi}} \cdot \frac{e^{2\sqrt{\smash[b]{x n}}}}{x^{1/4} \, n^{1/4}} \, (1 + \overline{C}_1 + \mathcal{E}_1 + \mathcal{E}_{3,1} )
      \end{split}
  \end{equation}
  if $n \geq \max\{m_0, m_1, \ldots, m_7\}$, where $\overline{C}_1$, $\mathcal{E}_1$, $\mathcal{E}_{3,1}$ and $m_0, m_1, \ldots, m_7$ are certain non-negative constants.
  From the forms given for the constants $m_i$ on p.\@~3303 we see that~\eqref{eq:laguerre-bound-initial} holds if
  \begin{equation*}
    \begin{split}
      n \geq \max\{m_0, m_1, \ldots, m_7\} &= \max\bigg\{ \frac{x}{4}, 5x, 0, -\frac{5}{4} x, 4x, x\bigg(1+ \frac{x}{2} \bigg)^2, \frac{9}{x}, 0 \bigg\} \\
      &= \max\bigg\{ 5x, x\bigg(1+\frac{x}{2} \bigg)^2, \frac{9}{x} \bigg\}.
      \end{split}
  \end{equation*}
  The constants $\overline{C}_1$, $\mathcal{E}_1$ and $\mathcal{E}_{3,1}$ satisfy (see pp.\@~3306, 3304 and 3294, respectively)
  \begin{align*}
    \overline{C}_1 &\leq \frac{7}{16 \sqrt{x}} + 2 \sqrt{x} (1 + x / 2) , \\
    \mathcal{E}_1 &\leq 2 e^{-4 \sqrt{\smash[b]{x(n+1)}}} + 2\sqrt{\pi} x^{1/4} (n+1)^{1/4} e^{-2 \sqrt{\smash[b]{x(n+1)}}}, \\
    \mathcal{E}_{3, 1} &\leq \frac{2 e^{-2 \sqrt{\smash[b]{x(n+1)}}} }{\sqrt{\pi} x^{1/4} (n+1)^{1/4}}.
  \end{align*}
  The last two of these we may further bound as
  \begin{equation*}
    \mathcal{E}_1 \leq 2 + 2\sqrt{\pi} x^{1/4} (n+1)^{1/4} e^{-2 \sqrt{\smash[b]{x (n+1)}}} \leq 2 + \sqrt{\frac{\pi}{e}} \quad \text{ and } \quad \mathcal{E}_{3,1} \leq \frac{2}{\sqrt{\pi} x^{1/4}},
  \end{equation*}
  where we have used the inequality $2y^{1/4} e^{-2 \sqrt{y}} \leq e^{-1/2}$ for $y \geq 0$.
\end{proof}

Note that the rate in~\eqref{eq:laguerre-bound} cannot be improved because~\eqref{eq:laguerre-bound-initial} is, in fact, not only an upper bound but an asymptotic equivalence for $L_n(-x)$.
The following proposition is similar to the lemmas (in particular Lemma~3) in Section~5 of~\cite{Zwicknagl2009}.\footnote{It seems that the eigenvalue bound used in the proof of Lemma~3 in~\cite{Zwicknagl2009} to estimate $L_n(-1)$ is erroneous because a more careful argument for bounding the log-sum gives
  \begin{equation*}
    \sum_{k=1}^{n-1} \log \bigg(1 + \frac{1}{\sqrt{k+1}} \bigg) \leq \int_1^n \log \bigg(1 + \frac{1}{\sqrt{x}} \bigg) \dif x = \sqrt{n} + (n-1)\log\bigg( 1 + \frac{1}{n}\bigg) - \frac{1}{2} \log n - 1,
  \end{equation*}
  which implies that $L_n(-1) = O(n^{-1/2} e^{2\sqrt{n}})$.
  This contradicts the Laguerre asymptotics in~\cite{Borwein2008}.
  Although its proof is erroneous, the upper bound $L_n(-1) \leq 2 e^{2\sqrt{n}}$ derived in the proof of the lemma in question is correct (at least for sufficiently large $n$) in the light of~\cite{Borwein2008} and our \Cref{prop:laguerre}.
}
See also~\cite[Sec.\@~6.4]{ZwicknaglSchaback2013}.
Define the unweighted kernel
\begin{equation} \label{eq:K0-kernel}
  R(x, y) = \sum_{k=0}^\infty \alpha_k^{-1} x^k y^k .
\end{equation}

\begin{lemma} \label{lemma:CK-2n-bound}
  Suppose that assumption~\eqref{eq:alpha-assumption} holds and let $R$ be the unweighted kernel in~\eqref{eq:K0-kernel}.
  Then
  \begin{equation*}
    C_{R}^{2n} \leq c_L(\lambda) \, n^{-1/4} e^{2\sqrt{\smash[b]{\lambda n}}} \alpha_n^{-1} n!^2
  \end{equation*}
  for every $n \geq m_L(\lambda)$, where
  \begin{equation} \label{eq:cL-constant}
    c_L(\lambda) = \frac{e^{\frac{1}{2} \lambda + 2\sqrt{\smash[b]{\lambda}}}}{2\sqrt{\pi} \lambda^{1/4}} \bigg( 3 + \sqrt{\frac{\pi}{e}} + \frac{2}{\sqrt{\pi} \lambda^{1/4}} + \frac{7}{16 \sqrt{\lambda}} + \sqrt{\lambda} (2 + \lambda) \bigg)
  \end{equation}
  and
  \begin{equation} \label{eq:mL-constant}
    m_L(\lambda) = \max\bigg\{ 5\lambda, \lambda\bigg(1+\frac{\lambda}{2} \bigg)^2, \frac{9}{\lambda} \bigg\}.
  \end{equation}
\end{lemma}
\begin{proof}
  We compute
  \begin{equation*}
      C_{R}^{2n} = \sup_{x = y \in [-1, 1]} \sum_{k=n}^\infty \alpha_k^{-1} \bigg(\frac{k!}{(k - n)!}\bigg)^2 x^{k-n} y^{k-n} = \sum_{k=n}^\infty \alpha_k^{-1} \bigg(\frac{k!}{(k - n)!}\bigg)^2.
  \end{equation*}
  Assumption~\eqref{eq:alpha-assumption} yields
  \begin{equation} \label{eq:CK2n-definition}
    \begin{split}
      C_{R}^{2n} = \alpha_n^{-1} \sum_{k=n}^\infty \frac{\alpha_n}{\alpha_k} \bigg(\frac{k!}{(k - n)!}\bigg)^2 &= \alpha_n^{-1} \sum_{k=0}^\infty \frac{\alpha_n}{\alpha_{n+k}} \bigg(\frac{(n + k)!}{k!}\bigg)^2 \\
      &\leq \alpha_n^{-1} \sum_{k=0}^\infty \lambda^k \frac{n!}{(n + k)!} \bigg(\frac{(n + k)!}{k!}\bigg)^2 \\
      &= \alpha_n^{-1} n! \sum_{k=0}^\infty \lambda^k \frac{(n + k)!}{k!^2}.
      \end{split}
  \end{equation}
  From these computations it is easy to see that (set $\alpha_k = \lambda^{-k} k!$ so that each inequality becomes an equality)
  \begin{equation} \label{eq:exp-kernel-CK2n-proof-1}
    \frac{\partial^{2n}}{\partial x^n \partial y^n} e^{\lambda xy} \biggl|_{\substack{x = 1 \\ y = 1}} = \lambda^n \sum_{k=0}^\infty \lambda^k \frac{(n + k)!}{k!^2}.
  \end{equation}
  However, by~\eqref{eq:laguerre-polynomial} we have an alternative expression for the above derivative in terms of Laguerre polynomials:
  \begin{equation} \label{eq:exp-kernel-CK2n-proof-2}
    \frac{\partial^{2n}}{\partial x^n \partial y^n} e^{\lambda xy} \biggl|_{\substack{x = 1 \\ y = 1}} = \frac{\dif^{\, n}}{\dif y^n} \lambda^n y^n e^{\lambda y} \biggl|_{y = 1} = e^{\lambda} \lambda^n n! L_n(-\lambda).
  \end{equation}
  Combining Equations~\eqref{eq:CK2n-definition}--\eqref{eq:exp-kernel-CK2n-proof-2} therefore yields
  \begin{equation*}
    C_R^{2n} \leq e^{\lambda} L_n(-\lambda) \alpha_n^{-1} n!^2.
  \end{equation*}
  The claim follows from \Cref{prop:laguerre}.
\end{proof}

It is useful, though not necessary for what follows, to observe that assumption~\eqref{eq:alpha-assumption} and the analyticity $\varphi$ ensure that $H(K)$ consists of analytic functions.
We refer to~\cite[pp.\@~40--43]{Saitoh1997} and~\cite{SaitohSawano2016, SunZhou2008} for general discussion and results on analyticity in the context of reproducing kernel Hilbert spaces.
Results on analyticity and related properties for kernels defined via Hermite polynomials may be found in~\cite[Sec.\@~3.1]{Gnewuch2022} and references therein.

\begin{proposition} \label{prop:analytic}
  Suppose that assumption~\eqref{eq:alpha-assumption} holds.
  If $\varphi$ is analytic, then every element of $H(K)$ is analytic.
\end{proposition}
\begin{proof}
  Without loss of generality we can assume that $\alpha_0 = 1$.
  Then assumption~\eqref{eq:alpha-assumption} implies that $\alpha_n \geq \lambda^{-n} n!$.
  If $f \in H(R)$ for the unweighted kernel $R$ in~\eqref{eq:K0-kernel}, Lemmas~\ref{lemma:rkhs-norm-derivative} and~\ref{lemma:CK-2n-bound} yield
  \begin{equation*}
    \begin{split}
      \norm[0]{f^{(n)}}_\infty \leq \norm[0]{f}_{R} C^n n! \, \alpha_n^{-1/2} \leq \norm[0]{f}_{R} C^n n! \sqrt{ \frac{ \lambda^n }{n!}} &\leq \norm[0]{f}_{R} (C \lambda^{1/2} \, )^n \sqrt{n!} \\
      &\leq \norm[0]{f}_{R} (C \lambda^{1/2} \, )^n n!
      \end{split}
  \end{equation*}
  for a certain positive $C$, which implies that $f$ is analytic~\cite[Lem.\@~1.2.10]{KrantzParks2002}.
  Since the mapping $f \mapsto \varphi f$ is an isometric isomorphism between $H(R)$ and $H(K)$, we see that if $\varphi$ is analytic, every element of $H(K)$ is analytic, being a product of two analytic functions.
\end{proof}

Let $f \colon I \to \R$ be an $n$ times continuously differentiable function on a closed interval $I$.
The unique polynomial $S_n f$ of degree $n-1$ that interpolates $f$ at some distinct points $x_1, \ldots, x_n \in I$ can be written in terms of the Lagrange basis functions as
\begin{equation} \label{eq:polynomial-interpolant}
  (S_{n} f)(x) = \sum_{k=1}^n f(x_k) \prod_{i \neq k} \frac{x - x_i}{x_k - x_i}.
\end{equation}
Define the \emph{weighted polynomial interpolant}
\begin{equation} \label{eq:polynomial-interpolant-weighted}
  (S_{n}^\varphi f)(x) = \varphi(x) \sum_{k=1}^n \frac{f(x_k)}{\varphi(x_k)} \prod_{i \neq k} \frac{x - x_i}{x_k - x_i}.
\end{equation}
Both of these interpolants have the form $\sum_{k=1}^n f(x_k) \psi_k$ for certain functions $\psi_k$.
It is a standard result~\cite[Sec.\@~2.6]{Hildebrand1987} that for each $x \in I$ there exists $\xi_x \in I$ such that
\begin{equation} \label{eq:polynomial-interpolation-error}
  f(x) - (S_{n} f)(x) = \frac{f^{(n)}(\xi_x)}{n!} \prod_{k=1}^n (x - x_k).
\end{equation}
Upper bounds on the worst-case errors will be straightforward corollaries of the following proposition.

\begin{proposition} \label{prop:poly-interp-bound}
  Suppose that assumption~\eqref{eq:phi-assumption} holds.
  If $f \in H(K)$ and the points $x_1, \ldots, x_n \in [-1, 1]$ are distinct, then
  \begin{equation} \label{eq:poly-interp-bound-1}
    \abs[0]{ f(x) - (S_{n}^\varphi f)(x) } \leq \varphi_{\max} \norm[0]{f}_K \frac{(C_{R}^{2n})^{1/2}}{n!} \, \abs[3]{ \prod_{k=1}^n (x - x_k) }
  \end{equation}
  for every $x \in [-1, 1]$.
\end{proposition}
\begin{proof}
  Since the mapping $f \mapsto \varphi f$ is an isometric isomorphism, for every $f \in H(K)$ there is $g \in H(R)$ such that $f = \varphi g$ and $\norm[0]{f}_K = \norm[0]{g}_{R}$.
  The definitions of $S_{n}$ and $S_{n}^\varphi$ and the assumption~\eqref{eq:phi-assumption} that $\varphi$ is bounded from above give
  \begin{equation*}
    \abs[0]{ f(x) - (S_{n}^\varphi f)(x) } = \abs[0]{\varphi(x)} \abs[0]{ g(x) - (S_{n} g)(x) } \leq \varphi_{\max} \abs[0]{ g(x) - (S_{n} g)(x) }.
  \end{equation*}
  By Equation~\eqref{eq:polynomial-interpolation-error} and Lemma~\ref{lemma:rkhs-norm-derivative},
  \begin{equation*}
    \begin{split}
    \abs[0]{ g(x) - (S_{n} g)(x) } \leq \frac{\norm[0]{g^{(n)}}_\infty}{n!} \, \abs[3]{ \prod_{k=1}^n (x - x_k) } &\leq \norm[0]{g}_{R} \frac{(C_{R}^{2n})^{1/2}}{n!} \, \abs[3]{ \prod_{k=1}^n (x - x_k) } \\
    &= \norm[0]{f}_{K} \frac{(C_{R}^{2n})^{1/2}}{n!} \, \abs[3]{ \prod_{k=1}^n (x - x_k) },
    \end{split}
  \end{equation*}
  which gives the claim.
\end{proof}

Note that the above proposition does not require that $\varphi$ be differentiable.
If $\varphi$ is assumed to be infinitely differentiable and $C_K^{2n}$ can be computed or estimated, one can use the polynomial interpolant~\eqref{eq:polynomial-interpolant} and the estimate
\begin{equation} \label{eq:poly-interp-bound-2}
  \abs[0]{ f(x) - (S_{n} f)(x) } \leq \norm[0]{f}_K \frac{(C_{K}^{2n})^{1/2}}{n!} \, \abs[3]{ \prod_{k=1}^n (x - x_k) }
\end{equation}
instead of~\eqref{eq:poly-interp-bound-1}.
However, since the weighting by $\varphi$ in $S_n^\varphi$ yields an interpolant that, at least intuitively, more resembles the elements of $H(K)$ than a polynomial interpolant, it is to be expected that~\eqref{eq:poly-interp-bound-1} is tighter than~\eqref{eq:poly-interp-bound-2}.
In \Cref{sec:applications} we demonstrate that this is indeed the case if $K$ is the Gaussian kernel.
Weighted polynomial interpolation provides a linear algorithm that attains the upper bounds in \Cref{thm:main-3}.

\begin{proof}[Proofs of upper bounds in \Cref{thm:main-3}.]
Let us first consider $p = \infty$.
Let $S_n^\varphi f$ be the weighted polynomial interpolant in~\eqref{eq:polynomial-interpolant-weighted}.
  Proposition~\ref{prop:poly-interp-bound} and Lemma~\ref{lemma:CK-2n-bound} yield
  \begin{equation*}
    \begin{split}
    \lvert f(x) - (S_n^\varphi f)(x) \rvert &\leq \varphi_{\max} \norm[0]{f}_K \frac{(C_{R}^{2n})^{1/2}}{n!} \bigg\lvert \prod_{k=1}^n (x - x_k) \bigg\lvert  \\
    &\leq \varphi_{\max} \norm[0]{f}_K \sqrt{c_L(\lambda)} \, n^{-1/8} e^{\sqrt{\smash[b]{\lambda n}}} \alpha_n^{-1/2} \bigg\lvert \prod_{k=1}^n (x - x_k) \bigg\lvert  \\
    \end{split}
  \end{equation*}
  when $n \geq m_L(\lambda)$.
  The upper bound in~\eqref{eq:main-33} is immediate, while that in~\eqref{eq:main-31} follows from the crude estimate $\lvert \prod_{k=1}^n (x - x_k) \rvert \leq 2^n$.
  To obtain a tighter bound for the $n$th minimal error we can select $x_1, \ldots, x_n$ as the roots of the $n$th Chebyshev polynomial $T_n$, for which we refer to \Cref{sec:lower-Linf}. Because the leading coefficient of $T_n$ is $2^{n-1}$, we have
  \begin{equation} \label{eq:chebyshev-sup-unif-proof}
    \sup_{x \in [-1,1]} \, \abs[3]{ \prod_{k=1}^n (x - x_{k}) } = 2^{-n+1} \norm[0]{T_n}_\infty,
  \end{equation}
  so that the upper bound in~\eqref{eq:main-32} follows from $\norm[0]{T_n}_\infty = 1$.

  The upper bounds for $p = 2$ are proved identically. We obtain
  \begin{equation*}
    \lVert f - S_n^\varphi f \rVert_2 \leq \varphi_{\max} \norm[0]{f}_K \sqrt{c_L(\lambda)} \, n^{-1/8} e^{\sqrt{\smash[b]{\lambda n}}} \alpha_n^{-1/2} \bigg( \int_{-1}^1 \prod_{k=1}^n (x - x_k)^2 \dif x \bigg)^{1/2} .
  \end{equation*}
  Estimating the integral as
  \begin{equation*}
    \begin{split}
      \int_{-1}^1 \prod_{k=1}^n (x - x_k)^2 \dif x &= \int_{-1}^0 \prod_{k=1}^n (x - x_k)^2 \dif x + \int_0^1 \prod_{k=1}^n (x - x_k)^2 \dif x \\
      &\leq \int_{-1}^0 (x - 1)^{2n} \dif x + \int_0^1 (x + 1)^{2n} \dif x \\
      &= \frac{2(2^{2n+1} - 1)}{2n + 1} \\
      &\leq n^{-1} 2^{2n + 1}
      \end{split}
  \end{equation*}
gives the upper bound in~\eqref{eq:main-31}. 
The upper bound in~\eqref{eq:main-32} is obtained by again selecting $x_1, \ldots, x_n$ as the roots of the $n$th Chebyshev polynomial $T_n$.
This improves the above integral estimate to
  \begin{equation} \label{eq:Chebyshec-L2-norm}
    \bigg( \int_{-1}^1 \prod_{k=1}^n (x - x_k)^2 \dif x \bigg)^{1/2} = \bigg( 2^{-2n+2} \int_{-1}^1 T_n(x) \dif x \bigg)^{1/2} \leq 2^{-n + 3/2}, 
  \end{equation}
from which the upper bound of~\eqref{eq:main-32} follows.\footnote{By extremal properties of monic orthogonal polynomials, the constant $2^{3/2}$ in~\eqref{eq:Chebyshec-L2-norm} could be optimised by selecting $x_1, \ldots, x_n$ as the roots of the $n$th Legendre polynomial.}
\end{proof}

\subsection{Application to specific kernels} \label{sec:applications}

In this section we apply~\eqref{eq:main-32} with $p = \infty$ to the four weighted power series kernels mentioned in the beginning of the section.
Derivations for $p = 2$ and other worst-case errors are entirely analogous and thus omitted.
It is notable that for each kernel the rate of decay of the $n$th minimal error is controlled by the shape parameter $\varepsilon$ as in~\eqref{eq:scale-dependency}.

The Gaussian kernel
\begin{equation} \label{eq:app-gaussian}
  K(x, y) = \exp\bigg(\! -\frac{1}{2} \varepsilon^2 (x - y)^2 \bigg)
\end{equation}
is obtained from~\eqref{eq:kernel} by selecting
\begin{equation*}
  \alpha_k = \varepsilon^{-2k} k! \quad \text{ and } \quad \varphi(x) = \exp\bigg(\!-\frac{1}{2} \varepsilon^2 x^2 \bigg).
\end{equation*}

\begin{corollary} \label{cor:gaussian}
  Let $K$ be the Gaussian kernel in~\eqref{eq:app-gaussian} and consider the setting of \Cref{thm:main-3}.
  Then
  \begin{equation*}
    c_1 \bigg( \frac{\varepsilon}{2} \bigg)^n (n!)^{-1/2} \leq e_\infty^{\min}(n) \leq  c_2 n^{-1/8} e^{\varepsilon \sqrt{\smash[b]{n}}} \bigg( \frac{\varepsilon}{2} \bigg)^n (n!)^{-1/2}
  \end{equation*}
  where the lower bound holds when $n \geq 1$ and the upper bound when $n \geq m_L(\varepsilon^2)$ and
  \begin{equation*}
    c_1 = \sqrt{ \frac{2}{1 + \varepsilon^2}} \, e^{-(\frac{1}{32} \varepsilon^4 + \frac{1}{2} \varepsilon^2)} \quad \text{ and } \quad c_2 = 2 \sqrt{c_L(\varepsilon^2)}.
  \end{equation*}
\end{corollary}

The upper bound of \Cref{cor:gaussian} is proved using weighted polynomial interpolation and \Cref{prop:poly-interp-bound}.
For the Gaussian kernel it is however easy to use polynomial interpolation and the estimate~\eqref{eq:poly-interp-bound-2}.
Straightforward differentation of the Taylor series of the Gaussian function yields
\begin{equation*}
    C_K^{2n} = \sup_{x \in [-1, 1]} \frac{\partial^{2n}}{\partial v^n \partial w^n} K(v, w) \biggl|_{\substack{v = x \\ w = x}} = (-1)^n \frac{\dif^{\,2n}}{\dif z^{2n}} e^{-\frac{1}{2} \varepsilon^2 z^2} \biggl|_{z = 0} = \varepsilon^{2n} \frac{(2n)!}{2^n n!}.
\end{equation*}
Plugging this in~\eqref{eq:poly-interp-bound-2} and using Stirling's formula produces
\begin{equation*}
  \norm[0]{f - S_n f}_\infty \leq c \norm[0]{f}_K n^{-1/2} (\sqrt{2 e} \, \varepsilon)^{n} n^{-n/2} \sup_{x \in [-1, 1]} \abs[3]{ \prod_{k=1}^n (x - x_k) }
\end{equation*}
for a positive constant $c$ if $f \in H(K)$.
In contrast, in the proof at the end of \Cref{sec:upper-bounds} we saw that
\begin{equation*}
  \begin{split}
  \norm[0]{ f - S_n^\varphi f }_\infty &\leq \varphi_{\max} \norm[0]{f}_K \sqrt{c_L(\varepsilon^2)} \, n^{-1/8} e^{\varepsilon\sqrt{n}} \alpha_n^{-1/2} \sup_{x \in [-1, 1]} \, \abs[3]{ \prod_{k=1}^n (x - x_k) } \\
  &\leq \tilde{c} \norm[0]{f}_K  n^{-1/8-1/4} e^{\varepsilon\sqrt{\smash[b]{n}}} (\sqrt{e} \, \varepsilon)^{n} n^{-n/2} \sup_{x \in [-1, 1]} \, \abs[3]{ \prod_{k=1}^n (x - x_k) }
  \end{split}
\end{equation*}
for a positive constant $\tilde{c}$, where the second inequality uses Stirling's formula~\eqref{eq:stirling}.
These estimates show that by using weighted polynomial interpolation one obtains an improvement of order $2^{-n/2}$ over polynomial interpolation.

The exponential kernel
\begin{equation} \label{eq:app-exp}
  K(x, y) = e^{\varepsilon x y}
\end{equation}
is obtained from~\eqref{eq:kernel} by selecting
\begin{equation*}
  \alpha_k = \varepsilon^{-k} k! \quad \text{ and } \quad \varphi(x) \equiv 1.
\end{equation*}

\begin{corollary}
  Let $K$ be the exponential kernel in~\eqref{eq:app-exp} and consider the setting of \Cref{thm:main-3}.
  Then
  \begin{equation*}
    c_1 \bigg( \frac{\varepsilon^{1/2}}{2} \bigg)^{n} (n!)^{-1/2} \leq e_\infty^{\min}(n) \leq  c_2 n^{-1/8} e^{\sqrt{\smash[b]{\varepsilon n}}} \bigg( \frac{\varepsilon^{1/2}}{2} \bigg)^{n} (n!)^{-1/2}
  \end{equation*}
  where the lower bound holds when $n \geq 1$ and the upper bound when $n \geq m_L(\varepsilon)$ and
  \begin{equation*}
     c_1 = \sqrt{ \frac{2}{1 + \varepsilon}} \, e^{-\frac{1}{32} \varepsilon^2} \quad \text{ and } \quad c_2 = 2 \sqrt{c_L(\varepsilon)}.
  \end{equation*}
\end{corollary}

Let $\tau > 0$.
The analytic Hermite kernel
\begin{equation} \label{eq:app-hermite}
  K(x, y) = \exp\bigg( \! - \frac{1}{2} \varepsilon^2(x^2 + y^2)  + \tau^2 \varepsilon^2 xy \bigg)
\end{equation}
is obtained from~\eqref{eq:kernel} by selecting
\begin{equation*}
  \alpha_k = (\tau \varepsilon)^{-2k} k! \quad \text{ and } \quad \varphi(x) = \exp\bigg(\!-\frac{1}{2} \varepsilon^2 x^2 \bigg).
\end{equation*}

\begin{corollary}
  Let $K$ be the analytic Hermite kernel in~\eqref{eq:app-hermite} and consider the setting of \Cref{thm:main-3}.
  Then
  \begin{equation*}
    c_1 \bigg( \frac{\tau \varepsilon}{2} \bigg)^n (n!)^{-1/2} \leq e_\infty^{\min}(n) \leq  c_2 n^{-1/8} e^{\tau \varepsilon \sqrt{\smash[b]{n}}} \bigg( \frac{\tau \varepsilon}{2} \bigg)^n (n!)^{-1/2}
  \end{equation*}
  where the lower bound holds when $n \geq 1$ and the upper bound when $n \geq m_L(\tau^2 \varepsilon^2)$ and
  \begin{equation*}
     c_1 = \sqrt{ \frac{2}{1 + \tau^2 \varepsilon^2}} \, e^{-(\frac{1}{32} \tau^4 \varepsilon^4 + \frac{1}{2} \varepsilon^2 ) } \quad \text{ and } \quad c_2 = 2 \sqrt{c_L(\tau^2 \varepsilon^2)}.
  \end{equation*}
\end{corollary}

The Bessel kernel
\begin{equation} \label{eq:app-bessel}
  K(x, y) =  \mathcal{I}_0( 2 \varepsilon \sqrt{\smash[b]{x y}} \, )
\end{equation}
is obtained from~\eqref{eq:kernel} by selecting
\begin{equation*}
  \alpha_k = \varepsilon^{-k} k!^2 \quad \text{ and } \quad \varphi \equiv 1.
\end{equation*}

\begin{corollary}
  Let $K$ be the Bessel kernel in~\eqref{eq:app-bessel} and consider the setting of \Cref{thm:main-3}.
  Then
  \begin{equation*}
    c_1 \bigg( \frac{\varepsilon^{1/2}}{2} \bigg)^{n} (n!)^{-1} \leq e_\infty^{\min}(n) \leq  c_2 n^{-1/8} e^{\sqrt{\smash[b]{\varepsilon n}}} \bigg( \frac{\varepsilon^{1/2}}{2} \bigg)^{n} (n!)^{-1}
  \end{equation*}
  where the lower bound holds when $n \geq 1$ and the upper bound when $n \geq m_L(\varepsilon)$ and
  \begin{equation*}
     c_1 = \sqrt{ \frac{2}{1 + \varepsilon}} \, e^{-\frac{1}{32} \varepsilon^2} \quad \text{ and } \quad c_2 = 2 \sqrt{c_L(\varepsilon)}.
  \end{equation*}
\end{corollary}

\subsection{Derivative information} \label{sec:derivative-information}

It is interesting to compare~\eqref{eq:main-31} to bounds for the worst-case error when information consists of derivative evaluations at a single point $a \in (-1, 1)$.
Here we consider the minimal worst-case error
\begin{equation*}
        e_\infty^{\min}(\mathrm{D}_a, n) = \inf_{\psi_1, \ldots, \psi_n \in L^\infty} \sup_{ 0 \neq f \in H(K) } \frac{ \lVert f - \sum_{k=0}^{n-1} f^{(k)}(a) \psi_k \rVert_\infty }{\lVert f \rVert_{K}} .
\end{equation*}
For brevity we omit the case $p = 2$.

\begin{theorem} \label{thm:derivative-unif}
  Suppose that assumptions~\eqref{eq:phi-assumption} and~\eqref{eq:alpha-assumption} hold.
  Let $c_L$ and $m_L$ be the functions defined in~\eqref{eq:cL-constant} and~\eqref{eq:mL-constant} and $a \in (-1, 1)$.
  Then
  \begin{equation*}
    \varphi_{\min} \bigg( \frac{1 + \abs[0]{a}}{(1 + \lambda a^2)^{1/2}}\bigg)^n \alpha_n^{-1/2} \leq e_\infty^{\min}(\mathrm{D}_a, n) \leq \sqrt{c_L(\lambda)} \, \varphi_{\max} \, n^{-1/8} e^{\sqrt{\smash[b]{\lambda n}}} (1 + \abs[0]{a})^n \alpha_n^{-1/2} ,
  \end{equation*}
 where the lower bound holds when $n \geq 1$ and the upper bound when $n \geq m_L(\lambda)$.
\end{theorem}
\begin{proof}
   We first prove the upper bound.
Consider the weighted Taylor approximation $T_a^n f$ given by
\begin{equation*}
  (T_a^n f)(x) = \varphi(x) \sum_{k=0}^{n-1} \frac{g^{(k)}(a)}{k!} (x - a)^k,
\end{equation*}
where $g = \varphi^{-1} f$.
Taylor's theorem yields
  \begin{equation*}
    \norm[0]{f - T_a^n f}_\infty \leq \varphi_{\max} \frac{\norm[0]{g^{(n)}}_\infty}{n!} \sup_{x \in [-1, 1]} \abs[0]{x - a}^n = \varphi_{\max} \frac{\norm[0]{g^{(n)}}_\infty}{n!} (1 + \abs[0]{a})^n.
  \end{equation*}
  The upper bound follows from \Cref{lemma:rkhs-norm-derivative,lemma:CK-2n-bound} and $\norm[0]{f}_K = \norm[0]{g}_{R}$, with $R$ the unweighted kernel in~\eqref{eq:K0-kernel}.

  To prove the lower bound, let $g_n(x) = (x - a)^n$ and $f_n = \varphi g_n \in H(K)$.
  From the Hilbert space characterisation~\eqref{eq:rkhs-norm} and the binomial theorem we compute
  \begin{align*}
    \norm[0]{f_n}_K^2 = \sum_{k=0}^n \alpha_{n-k} \binom{n}{k}^2 a^{2k} = \alpha_n \sum_{k=0}^n \frac{\alpha_{n-k}}{\alpha_n} \binom{n}{k}^2 a^{2k} &\leq \alpha_n \sum_{k=0}^n \lambda^k \frac{(n-k)!}{n!} \binom{n}{k}^2 a^{2k} \\
    &= \alpha_n \sum_{k=0}^n \binom{n}{k} (\lambda a^2)^k \frac{1}{k!}  \\
    &\leq \alpha_n \sum_{k=0}^n \binom{n}{k} (\lambda a^2)^k  \\
    &= \alpha_n (1 + \lambda a^2)^n.
  \end{align*}
  Clearly,
  \begin{equation} \label{eq:Adan-sup}
    \norm[0]{f_n}_\infty = \norm[0]{ \varphi g_n}_\infty \geq \varphi_{\min} \max\{ g_n(-1), g_n(1) \} = \varphi_{\min} (1 + \abs[0]{a})^n.
  \end{equation}
  The derivatives up to order $n-1$ of $g_n$ vanish at $x = a$.
  This implies that also the derivatives of $f_n$ vanish at $x = a$.
  Thus $\sum_{k=0}^{n-1} f^{(k)}(a) \psi_k \equiv 0$.
  Consequently,
  \begin{equation*}
    e_\infty^{\min}(\mathrm{D}_a, n) = \inf_{\psi_1, \ldots, \psi_n \in L^\infty(\Omega)} \sup_{ 0 \neq f \in H(K) } \frac{ \lVert f - \sum_{k=0}^{n-1} f^{(k)}(a) \psi_k \rVert_\infty }{\lVert f \rVert_{K}} \geq \frac{\norm[0]{f_n}_\infty}{\norm[0]{f_n}_K}.
  \end{equation*}
  Using $\norm[0]{f_n}_K^2 \leq \alpha_n (1 + \lambda a^2)^n$ and~\eqref{eq:Adan-sup} yields the claimed lower bound.
\end{proof}

Note that, up to a constant, the upper bound from \Cref{thm:derivative-unif} coincides with the upper bound on $\sup_{x \in [-1, 1]} e^{\min}(x \mid x_1, \ldots, x_n)$ that is obtained by formally setting $x_1 = \cdots = x_n = a$ in~\eqref{eq:main-33}.

\subsection{Some remarks on higher dimensions} \label{sec:higher-dimensions}

Let us conclude with a few short remarks about how the results in this section could and could not be generalised to higher dimensions.
The natural generalisation of the kernel~\eqref{eq:kernel} to dimension $d \in \N$ is
\begin{equation} \label{eq:kernel-multivariate}
  K_d(\b{x}, \b{y}) = \varphi(\b{x}) \varphi(\b{y}) \sum_{ \b{k} \in \N_0^d} \alpha_{\b{k}}^{-1} \b{x}^{\b{k}} \b{y}^{\b{k}} \quad \text{ for } \quad \b{x}, \b{y} \in [-1, 1]^d,
\end{equation}
with summation over $d$-dimensional non-negative multi-indices.
This kernel is well-defined and strictly positive-definite if $\varphi \colon [-1, 1]^d \to \R$ is non-vanishing and $\sum_{\b{k} \in \N_0^d} \alpha_{\b{k}}^{-1} < \infty$.
The definitions of worst-case errors and the Hilbert space characterisations in \Cref{sec:intro} generalise naturally.
Due to non-uniqueness of polynomial interpolation in dimensions higher than one it is no surprise that the worst-case error no longer tends to zero for an arbitrary sequence of points.

\begin{proposition}
  Suppose that $d \geq 2$ and $p \in [0, \infty]$.
  Let $K_d$ be any kernel of the form~\eqref{eq:kernel-multivariate}.
  Then there exists a positive constant $c$ such that for every $n \geq 1$ there are distinct points $\b{x}_1, \ldots, \b{x}_n \in [-1, 1]^d$ such that
  \begin{equation*}
    e_p^{\min}(\b{x}_1, \ldots, \b{x}_n) \geq c .
  \end{equation*}
\end{proposition}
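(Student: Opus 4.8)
The plan is to exploit the failure of polynomial unisolvence in dimension $d \geq 2$ by forcing every sampling point onto the hyperplane $\{\b{x} \in [-1,1]^d : x_2 = 0\}$, which is a continuum when $d \geq 2$, and to exhibit a single fixed nonzero element of $H(K_d)$ that vanishes identically on that hyperplane. The constant $c$ will then simply be the ratio of the $L^p$-norm to the $H(K_d)$-norm of this fixed function, which manifestly does not depend on $n$.

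First I would invoke the same lower bound mechanism used throughout \Cref{sec:unif,sec:L2}: if $f \in H(K_d)$ vanishes at all of $\b{x}_1, \dots, \b{x}_n$, then $A_n f = 0$ for every approximation $A_n$ of the form~\eqref{eq:linear-algorithm}, so $e_p(A_n) \geq \norm[0]{f}_p / \norm[0]{f}_{K_d}$. It therefore suffices to produce, for each $n \geq 1$, pairwise distinct points in $[-1,1]^d$ and a single $n$-independent nonzero $f \in H(K_d)$ vanishing at all of them, with $\norm[0]{f}_p / \norm[0]{f}_{K_d}$ bounded below by a positive constant.

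The choice I would make is $f(\b{x}) = \varphi(\b{x}) x_2$, corresponding under the multivariate version of the Hilbert space characterisation of \Cref{sec:rkhs} to the coefficient sequence supported on the single multi-index $\b{e}_2 = (0,1,0,\dots,0) \in \N_0^d$; thus $f \in H(K_d)$ with $\norm[0]{f}_{K_d}^2 = \alpha_{\b{e}_2}$, a fixed positive real number since $(\alpha_{\b{k}})$ is a positive sequence. For each $n$ I would pick pairwise distinct $t_1, \dots, t_n \in [-1,1]$ and set $\b{x}_k = (t_k, 0, \dots, 0)$; these are pairwise distinct, and $f(\b{x}_k) = \varphi(\b{x}_k) \cdot 0 = 0$ for every $k$. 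Consequently $e_p(A_n) \geq \norm[0]{\varphi \cdot x_2}_p / \alpha_{\b{e}_2}^{1/2} =: c$ for every linear approximation of the form~\eqref{eq:linear-algorithm}.

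The only remaining point, and it is a minor one rather than a genuine obstacle, is to check $c > 0$, i.e.\ $\norm[0]{\varphi \cdot x_2}_p > 0$: for $1 \leq p < \infty$ this holds because $\abs[0]{\varphi(\b{x})}^p \abs[0]{x_2}^p > 0$ on $\{x_2 \neq 0\}$, a set of positive Lebesgue measure, since $\varphi$ is non-vanishing; for $p = \infty$ it holds by evaluating at any point with $\abs[0]{x_2} = 1$. Note that the argument uses nothing about the kernel beyond $\varphi$ being pointwise nonzero and $\alpha_{\b{e}_2}$ being a positive real, so it applies to every kernel of the form~\eqref{eq:kernel-multivariate} and requires neither~\eqref{eq:phi-assumption} nor~\eqref{eq:alpha-assumption}; one could equally use $x_j$ for any $2 \leq j \leq d$ in place of $x_2$.
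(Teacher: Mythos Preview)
Your proof is correct and follows the same overarching strategy as the paper's proof---exhibit a fixed nonzero $f \in H(K_d)$ that vanishes on an infinite subset of $[-1,1]^d$, place the $n$ points there, and invoke $e_p(A_n) \geq \norm[0]{f}_p/\norm[0]{f}_{K_d}$---but the specific construction differs. The paper places the points on the unit sphere $\{x_1^2 + \cdots + x_d^2 = 1\}$ and uses the degree-two polynomial $P(\b{x}) = x_1^2 + \cdots + x_d^2 - 1$, setting $f = \varphi P$; you place the points on the $x_1$-axis and use the degree-one monomial $x_2$, setting $f(\b{x}) = \varphi(\b{x})x_2$. Your choice is marginally cleaner: the RKHS norm collapses to the single term $\alpha_{\b{e}_2}^{1/2}$, whereas the paper's $f$ has $d+1$ nonzero coefficients and its norm is not written out explicitly. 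Both constructions exploit the same phenomenon (an algebraic variety of positive dimension inside $[-1,1]^d$ once $d \geq 2$), and neither requires assumptions~\eqref{eq:phi-assumption} or~\eqref{eq:alpha-assumption}.
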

\begin{proof}
  Let $\b{x}_1, \ldots, \b{x}_n$ be any pairwise distinct points on the unit circle.
  The polynomial $P(\b{x}) = x_1^2 + \cdots + x_d^2 - 1$, where $\b{x} = (x_1, \ldots, x_d)$,  vanishes on the unit circle and the function defined as $f(\b{x}) = \varphi(\b{x}) P(\b{x})$ is a non-zero element of $H(K_d)$.
  Therefore $\sum_{k=1}^n f(\b{x}_k) \psi_k \equiv 0$.
  It follows that $e_p^{\min}(\b{x}_1, \ldots, \b{x}_n) \geq \norm[0]{f}_p / \norm[0]{f}_{K_d} > 0$, where the lower bound does not depend on $n$.
\end{proof}

It would be straightforward to use tensor grids to prove limited generalisations to higher dimensions of \Cref{thm:main-3}.
We also note that Bernstein~\cite{Bernstein1948} has proved a certain multivariate extension of the Markov inequality used in \Cref{sec:lower-Linf}. See~\cite{Ganzburg2021} for a somewhat more accessible source.

\section{Stationary kernels} \label{sec:stationary}

Let $\Phi \colon \R \to \R$ be a continuous and integrable positive-semidefinite function and consider the positive-semidefinite kernel $K$ given by $K(x, y) = \Phi(x - y)$ for all $x, y \in \R$.
Such kernels are variously called stationary or translation-invariant.
Bochner's theorem~\citep[Thm.\@~6.6]{Wendland2005} ensures that the Fourier transform
\begin{equation*}
        \widehat{\Phi} (\omega) = \frac{1}{\sqrt{2\pi}} \int_\R \Phi(x) e^{\ri x \omega} \dif x 
\end{equation*}
of $\Phi$ is real and non-negative.
The Hilbert space $H(K)$ consists of square-integrable functions whose Fourier transforms decay sufficiently fast~\citep[Thm.\@~10.12]{Wendland2005}:
\begin{equation} \label{eq:rkhs-stationary}
        H(K) = \Set[\bigg]{ f \in L^2(\R) }{ \lVert f \rVert_K^2 = \frac{1}{\sqrt{2\pi}} \int_\R \frac{\lvert \widehat{f}(\omega) \rvert^2 }{\widehat{\Phi}(\omega)} \dif \omega < \infty } .
\end{equation}
The Gaussian kernel $K(x, y) = \exp(-\tfrac{1}{2} \varepsilon^2 (x - y)^2)$ is one of the most straightforward examples of a stationary kernel, having
\begin{equation} \label{eq:gaussian-stationary}
        \Phi(z) = \exp\bigg(\! -\frac{1}{2} \varepsilon^2 z^2 \bigg) \quad \text{ and } \quad \widehat{\Phi}(\omega) = \frac{1}{\varepsilon} \exp\bigg(\! -\frac{1}{2\varepsilon^2} \omega^2 \bigg) .
\end{equation}
By~\eqref{eq:rkhs-stationary} the Hilbert space of the Gaussian kernel therefore consists of functions with super-exponentially decaying Fourier transforms.
Note that exponential decay of the Fourier transform suffices for analyticity by the Paley--Wiener theorem~\cite[Thm.\@~IX.13]{ReedSimon1975}.

In this section we prove lower bounds on the worst-case errors in~\eqref{eq:wce1}--\eqref{eq:wce-x} under assumptions analogous to~\eqref{eq:phi-assumption} and~\eqref{eq:alpha-assumption}.
Firstly, we assume that there is a positive constant $\Phi_\textup{min}$ such that 
\begin{equation} \label{eq:phi-assumption-stat}
  0 < \Phi_\textup{min} \leq \abs[0]{\Phi(x)}
\end{equation}
for all $x \in [-1, 1]$.
We note that $\Phi$ is uniformly bounded from above on $[-1, 1]$ by continuity.
Secondly, define 
\begin{equation} \label{eq:alpha-stationary}
        \beta_k = \frac{1}{\sqrt{2\pi}} \int_\R \frac{ \widehat{\Phi}^{(k)}(\omega)^2 }{\widehat{\Phi}(\omega)} \dif \omega \quad \text{ for } \quad k \geq 0.
\end{equation}
We assume that $\widehat{\Phi}$ is infinitely differentiable and that there exists a positive constant $\lambda$ such that
\begin{equation} \label{eq:alpha-assumption-stat}
  \beta_n < \infty \quad \text{ and } \quad \frac{\beta_{k}}{\beta_n} \leq \lambda^{n-k} \frac{k!}{n!}
\end{equation}
for all $n \geq 0$ and $0 \leq k \leq n$.
Observe that in \Cref{sec:power-series} the coefficient $\alpha_k$ is the squared Hilbert space norm of the weighted monomial $f(x) = \varphi(x) x^k$ by~\eqref{eq:rkhs-norm}.
Similarly, for $g(x) = \Phi(x) x^k$ we get
\begin{equation} \label{eq:mono-norm-stat}
        \lVert g \rVert_K^2 = \frac{1}{\sqrt{2\pi}} \int_\R \frac{\lvert \widehat{g}(\omega) \rvert^2 }{\widehat{\Phi}(\omega)} \dif \omega = \frac{1}{\sqrt{2\pi}} \int_\R \frac{\lvert \ri^k \widehat{\Phi}^{(k)}(\omega) \rvert^2 }{\widehat{\Phi}(\omega)} \dif \omega = \beta_k .
\end{equation}
The constants $\beta_k$ and assumption~\eqref{eq:alpha-assumption-stat} play the same role as $\alpha_k$ and~\eqref{eq:alpha-assumption} in \Cref{sec:power-series}.

Before proceeding, let us verify~\eqref{eq:alpha-assumption-stat} for the Gaussian kernel. Since $\varphi = \Phi$ for the Gaussian kernel, it follows from~\eqref{eq:mono-norm-stat} that $\beta_k = \alpha_k = \varepsilon^{-2k} k!$, but it is nevertheless useful to use Fourier transforms to check this.
Equation~\eqref{eq:gaussian-stationary} and the Rodrigues' formula for probabilist's Hermite polynomials give
\begin{equation*}
        \widehat{\Phi}^{(k)}(\omega) = \frac{(-1)^k}{\varepsilon^{k+1}} H_k\bigg(\frac{\omega}{\varepsilon}\bigg) \exp\bigg(\! -\frac{1}{2\varepsilon^2} \omega^2 \bigg) = \frac{(-1)^k}{\varepsilon^{k}} H_k\bigg(\frac{\omega}{\varepsilon}\bigg) \widehat{\Phi}(\omega) ,
\end{equation*}
where $H_k$ is the $k$th probabilist's Hermite polynomial.
From the weighted $L^2$ norm of Hermite polynomials we obtain
\begin{equation*}
        \begin{split}
        \beta_k = \frac{1}{\sqrt{2\pi}} \int_\R \frac{ \widehat{\Phi}^{(k)}(\omega)^2 }{\widehat{\Phi}(\omega)} \dif \omega &= \varepsilon^{-2k} \frac{1}{\sqrt{2\pi}} \int_\R H_k\bigg(\frac{\omega}{\varepsilon}\bigg)^2 \, \frac{1}{\varepsilon} \exp\bigg(\! -\frac{1}{2\varepsilon^2} \omega^2 \bigg) \dif \omega \\
        &= \varepsilon^{-2k} \frac{1}{\sqrt{2\pi}} \int_\R H_k(\omega)^2 \exp\bigg(\! -\frac{1}{2} \omega^2 \bigg) \dif \omega \\
        &= \varepsilon^{-2k} k! ,
        \end{split}
\end{equation*}
which satisfies~\eqref{eq:alpha-assumption-stat} with $\lambda = \varepsilon^2$.
We prove the following theorem in \Cref{sec:proof-4}.

\begin{theorem} \label{thm:main-4}
  Let $K$ be a stationary positive-semidefinite kernel and $p \in \{2, \infty\}$.
  Suppose that $\Phi$ and $(\beta_k)_{k=0}^\infty$ satisfy~\eqref{eq:phi-assumption-stat} and~\eqref{eq:alpha-assumption-stat}.
  Then
  \begin{align}
        \tilde{c}_{1,p} \, (n+1)^{-1/2} \, 2^{-n} \beta_n^{-1/2} &\leq e_p^{\min}(x_1, \ldots, x_n), \label{eq:main-41} \\
        \tilde{c}_{1,p} \, (n+1)^{-1/2} \, 2^{-n} \beta_n^{-1/2} &\leq e_p^{\min}(n), \label{eq:main-42} \\
        \tilde{c}_{1,\infty} \, (n+1)^{-1/2} \, 2^{-2n} \beta_n^{-1/2} &\leq \frac{e^{\min}(x \mid x_1, \ldots, x_n)}{\lvert \prod_{k=1}^n (x - x_k) \rvert} \label{eq:main-43}
  \end{align}
  for every $n \geq 1$, where
  \begin{equation} \label{eq:constants-stat}
    \tilde{c}_{1,\infty} = \sqrt{ \frac{2}{1 + \lambda}} \, e^{-\frac{1}{32}\lambda^2} \Phi_\textup{min} \quad \text{ and } \quad \tilde{c}_{1, 2} = \frac{2\sqrt{2\pi}}{3e^3 \sqrt{\smash[b]{3(4 + \lambda)}}} \, e^{-\frac{1}{32}\lambda^2} \Phi_\textup{min}.
  \end{equation}
\end{theorem}

The only difference to the lower bounds of \Cref{thm:main-3} is the additional factor $(n+1)^{-1/2}$ and the substitution of $\Phi_\textup{min}$ for $\varphi_\textup{min}$ in the constant.
We omit upper bounds as we have not found a convenient way to express them in terms of $\beta_n$.
Note that polynomial interpolation~\eqref{eq:polynomial-interpolation-error} and \Cref{lemma:rkhs-norm-derivative}, together with stationarity, yield the bound 
\begin{equation*}
        \begin{split}
        \lvert f(x) - (S_n f)(x) \rvert &\leq \norm[0]{f}_K \frac{(C_{K}^{2n})^{1/2}}{n!} \, \abs[3]{ \prod_{k=1}^n (x - x_k) } \\
        &= \norm[0]{f}_K \abs[3]{ \prod_{k=1}^n (x - x_k) } \frac{1}{n!} \sqrt{\smash[b]{\Phi^{(2n)}(0)}} \\
        &= \norm[0]{f}_K \abs[3]{ \prod_{k=1}^n (x - x_k) } \frac{1}{n!} \bigg( \int_\R \omega^{2n} \, \widehat{\Phi}(\omega) \dif \omega \bigg)^{1/2} ,
        \end{split}
\end{equation*}
which is in terms of the moments of $\widehat{\Phi}$ rather than of $\beta_n$.
The upper bound in~\cite{Yarotsky2013} (to be discussed in \Cref{sec:yarotsky}) is derived from the above estimate.

\subsection{Proof of \Cref{thm:main-4}} \label{sec:proof-4}

To prove \Cref{thm:main-4} we use the following lemma, a cousin of \Cref{lemma:unif-norm-bound,lemma:l2-norm-bound}.

\begin{lemma} \label{lemma:rkhs-norm-bound-stat}
Let $P_n(x) = \sum_{k=0}^n a_k x^k$ for $a_k \in \R$ be a polynomial of degree $n$.
If assumption~\eqref{eq:alpha-assumption-stat} holds, then the function $f_n(x) = \Phi(x) P_n(x)$ is an element of $H(K)$ and its norm satisfies
\begin{equation*}
        \norm[0]{f_n}_K^2 \leq (1 + \lambda) \, \tau_2 \, (n+1) \, 2^{2n} \alpha_n \norm[0]{P_n}_\infty^2 \quad \text{ and } \quad \norm[0]{f_n}_K^2 \leq \ell_2 \, (n + 1) \,  2^{2n} \alpha_n \norm[0]{P_n}_{2}^2
\end{equation*}
for every $n \geq 1$, where the constants $\tau_2$ and  $\ell_2$ are given in~\eqref{eq:chebyshev-norm-lemma-constants} and~\eqref{eq:c-ell-L2}.
\end{lemma}
\begin{proof}
By the properties of the Fourier transform,
\begin{equation*}
        \widehat{f}_n(\omega) = \sum_{k=0}^n a_k \ri^k \widehat{\Phi}^{(k)}(\omega).
\end{equation*}
From~\eqref{eq:rkhs-stationary}, Jensen's inequality and~\eqref{eq:alpha-stationary} we get
\begin{equation*}
        \begin{split}
        \lVert f_n \rVert_K^2 = \frac{1}{\sqrt{2\pi}} \int_\R \frac{\lvert \widehat{f}_n(\omega) \rvert^2 }{\widehat{\Phi}(\omega)} \dif \omega &= \frac{1}{\sqrt{2\pi}} \int_\R \frac{1}{\widehat{\Phi}(\omega)} \bigg\lvert \sum_{k=0}^n a_k \ri^k \widehat{\Phi}^{(k)}(\omega) \bigg\rvert^2 \dif \omega \\
        &\leq \frac{n+1}{\sqrt{2\pi}} \int_\R \frac{1}{\widehat{\Phi}(\omega)} \sum_{k=0}^n a_k^2 \, \widehat{\Phi}^{(k)}(\omega)^2 \dif \omega \\
        &= (n+1) \sum_{k=0}^n \beta_k a_k^2 .
        \end{split}
\end{equation*}
\Cref{lemma:unif-norm-bound,lemma:l2-norm-bound} provide bounds on $\sum_{k=0}^n \alpha_k a_k^2$ if $\alpha_k$ satisfy~\eqref{eq:alpha-assumption}.
Because assumption~\eqref{eq:alpha-assumption-stat} on $\beta_k$ is identical to~\eqref{eq:alpha-assumption}, the claimed bounds follow from these lemmas.
\end{proof}

\begin{proof}[Proofs of lower bounds in \Cref{thm:main-4}] 
Select $f_n(x) = \Phi(x) (x - x_1) \cdots (x - x_n)$ and proceed as in \Cref{sec:lower-Linf,sec:lower-L2}. 
Use \Cref{lemma:rkhs-norm-bound-stat} in the place of \Cref{lemma:unif-norm-bound,lemma:l2-norm-bound} to estimate the norm of $f_n$, which yields lower bounds that differ from those in \Cref{thm:main-3} by the factor $(n+1)^{-1/2}$.
\end{proof}

\subsection{Bounds by Yarotsky} \label{sec:yarotsky}

Yarotsky~\cite{Yarotsky2013} has derived pointwise upper and lower bounds on the posterior variance in Gaussian process interpolation for a certain class of analytic stationary covariance kernels.
It is well known that the Gaussian process posterior variance equals the squared worst-case error in the Hilbert space of the covariance kernel, a fact that one can verify by noting that~\cite[e.g.,][Sec.\@~10.2]{NovakWozniakowski2010}
\begin{equation*} 
        \begin{split}
        e^{\min}(x \mid x_1, \ldots, x_n)^2 &= \inf_{u_1, \ldots, u_n \in \R} \sup_{ 0 \neq f \in H(K) } \frac{ \lvert f(x) - \sum_{k=1}^n f(x_k) u_k \rvert^2 }{\lVert f \rVert_{K}^2} \\
        &= \inf_{u_1, \ldots, u_n \in \R} \bigg\lVert K(\cdot, x) - \sum_{k=1}^n u_k K(\cdot, x_k) \bigg\rVert_K^2 \\
        &= \inf_{u_1, \ldots, u_n \in \R} \bigg( K(x, x) - 2\sum_{k=1}^n u_k K(x, x_k) + \sum_{k,l=1}^n u_k u_l K(x_k, x_l) \bigg) \\
        &= K(x, x) - \mathsf{k}_n(x)^\mathsf{T} \mathsf{K}_n^{-1} \mathsf{k}_n(x) ,
        \end{split}                   
\end{equation*}
where $\mathsf{k}_n(x)$ is an $n$-vector with elements $K(x, x_i)$ and $\mathsf{K}_n$ an $n \times n$ matrix with elements $K(x_i, x_j)$.
The last expression, which requires assuming that the kernel is strictly positive-definite, equals the posterior variance in Equation~(6) of~\cite{Yarotsky2013}.

\begin{theorem}[Special case of Theorem~2 in \cite{Yarotsky2013}] \label{thm:yarotsky}
Let $K$ be a strictly positive-definite stationary kernel such that
\begin{equation*}
        \widehat{\Phi}(\omega) = \exp(-a\lvert \omega \rvert^b) \quad \text{ for } \quad a > 0 \: \text{ and } \: b > 1.
\end{equation*}
If $n$ is sufficiently large and $x_1, \ldots, x_n \in [-1, 1]$ are distinct, then
\begin{equation*}
        \begin{split}
        c \, e^{-n/2} (ab e)^{-n/b} &\bigg(\frac{(2n+1)^{1/b}}{n} \bigg)^{n+1/2} \\
        &\leq \frac{e^{\min}(x \mid x_1, \ldots, x_n)}{\lvert \prod_{k=1}^n (x - x_k) \rvert} \leq c \, e^{n} (ab e)^{-n/b} \bigg(\frac{(2n+1)^{1/b}}{n} \bigg)^{n+1/2} ,
        \end{split}          
\end{equation*}
where $c = (ab e)^{-1/2b}$.
\end{theorem}

Yarotsky uses polynomial interpolation to prove the upper bound.
His proof of the lower bound differs considerably from our proofs based on estimation of polynomial coefficients.
Rather than construct a fooling function, he writes the worst-case error as an integral of the product of an exponential sum and the Fourier transform of $\Phi$ which he then truncates and bounds from below, a feat which involves an explicit expression for the distance between $(1, z, \ldots, z^{n}) \in \C^{n+1}$ and the span of $((1, z_k, \ldots, z_k^n))_{k=1}^n \subset \C^{n+1}$.

The assumption $b > 1$ ensures that $H(K)$ consists of analytic functions.
Setting $b = 2$ and $a = 1/(2\varepsilon^2)$ gives a constant multiple of the Gaussian kernel by~\eqref{eq:gaussian-stationary}.
In this case the lower and upper bounds in \Cref{thm:yarotsky} are asymptotically constant multiples of
\begin{equation*}
        n^{-1/4} \bigg(\frac{\sqrt{2} \, \varepsilon}{e}\bigg)^n n^{-n/2} \quad \text{ and } \quad n^{-1/4} (\sqrt{2e} \, \varepsilon)^n n^{-n/2} ,
\end{equation*}
respectively.
For the Gaussian kernel our corresponding bounds in~\eqref{eq:main-33} are constant multiples of
\begin{equation} \label{eq:yaro-ours}
        \bigg( \frac{\varepsilon}{4} \bigg)^n (n!)^{-1/2} \quad \text{ and } \quad n^{-1/8} e^{\varepsilon \sqrt{n}} \varepsilon^n (n!)^{-1/2} .
\end{equation}
By Stirling's formula, $n! \sim \sqrt{2\pi} n^{n+1/2} e^{-n}$, so that the bounds~\eqref{eq:yaro-ours} are asymptotically constant multiples of
\begin{equation*}
      n^{-1/4} \bigg( \frac{\sqrt{e} \, \varepsilon}{4} \bigg)^n n^{-n/2} \quad \text{ and } \quad n^{-3/8} e^{\varepsilon \sqrt{n}} (\sqrt{e} \, \varepsilon)^n n^{-n/2} .
\end{equation*}
Because \smash{$\sqrt{2}/e \approx 0.52 > \sqrt{e}/4 \approx 0.41$} and \smash{$\sqrt{e} < \sqrt{2e}$}, the lower bound in \Cref{thm:yarotsky} is tighter than that in~\eqref{eq:main-33} but the upper bound is looser.
That our upper bound is tighter is due to our use of weighted polynomial interpolation; Yarotsky uses polynomial interpolation, which is expected to yield worse upper bounds as discussed in \Cref{sec:upper-bounds,sec:applications}.

\section*{Acknowledgements}
The authors were supported by the Research Council of Finland decisions 338567, 348503, 359181 and 359183.


\end{document}